 \newtheorem{theorem}{\sc Theorem}[section]
 \newtheorem{lemma}[theorem]{\sc Lemma}
 \newtheorem{cor}[theorem]{\sc Corollary}
 \newtheorem{prop}[theorem]{\sc Proposition}
\newtheorem{remark}[theorem]{\sc Remark}
\def\PG{\mathrm{PG}}
\def\F{\mathbb{F}}
\def\Fq{\mathbb{F}_q}
\def\cD{\mathcal{D}}
\def\R{\mathcal{R}}
\def\GL{\mathrm{GL}}
\def\dim{\mathrm{dim}}
\def\cF{\mathcal{F}}
\def\cB{\mathcal{B}}
\def\cH{\mathcal{H}}
\DeclareMathOperator{\PGL}{PGL}
\DeclareMathOperator{\AG}{AG}
\def\cC{\mathcal{C}}
\def\cI{\mathcal{I}}
\def\cK{\mathcal{K}}
\def\cP{\mathcal{P}}
\def\cR{\mathcal{R}}
\def\cS{\mathcal{S}}
\def\cV{\mathcal{V}}
\newcommand{\segre}[3]{\cS_{#1,#2,#3}}
\begin{document}

\title{Subspaces intersecting each element of a regulus in one point, Andr\'e-Bruck-Bose representation and clubs}
\author{Michel Lavrauw\thanks{Dipartimento di Tecnica e Gestione dei Sistemi Industriali,
Universit\`a di Padova, Stradella S. Nicola 3, 36100 Vicenza, Italy, e-mail: michel.lavrauw@unipd.it. 
The research of this author is supported by the Research Foundation Flanders-Belgium (FWO-Vlaanderen) and by a 
Progetto di Ateneo from Universit\`a di Padova (CPDA113797/11).} 
\ and Corrado Zanella\thanks{Dipartimento di Tecnica e Gestione dei Sistemi Industriali,
Universit\`a di Padova, Stradella S. Nicola 3, 36100 Vicenza, Italy, e-mail: corrado.zanella@unipd.it.
The research of this author is supported by the Italian Ministry of Education, University and Research (PRIN 2012 project ``Strutture geometriche, combinatoria e loro applicazioni'').}
}
\date{\today}
\maketitle
\begin{abstract}
In this paper results are proved with applications to the orbits of $(n-1)$-dimensional subspaces disjoint from a regulus $\cR$ of $(n-1)$-subspaces in
$\PG(2n-1,q)$, with respect to the subgroup of $\PGL(2n,q)$ fixing $\cR$.
Such results have consequences on several aspects of finite geometry.
First of all, a necessary condition for an $(n-1)$-subspace $U$ and a regulus $\cR$ of $(n-1)$-subspaces to be extendable to a Desarguesian spread is given.
The description also allows to improve results in \cite{BaJa12} 
on the Andr\'e-Bruck-Bose representation of a $q$-subline in 
$\PG(2,q^n)$. Furthermore, the results in this paper are applied to 
the classification of linear sets, in particular clubs.

A.M.S. CLASSIFICATION: 51E20

KEY WORDS: club; linear set; subplane; Andr\'e-Bruck-Bose representation; Segre variety
\end{abstract}

\section{Introduction}\label{sec:intro}

The $(n-1)$-dimensional projective projective space over the field $F$ is denoted by $\PG(n-1,F)$ or $\PG(n-1,q)$ if $F$ is the finite field of order $q$ (denoted by $\F_q$). If $L$ is an extension field $\F_q$, then the projective space defined by the $\F_q$-vector space induced by $L^d$ is denoted by $\PG_q(L^d)$. For further notation and general definitions employed in this paper the reader is referred to \cite{LaVaPrep,LaZa14b,Po10}.
For more information on Desarguesian spreads see \cite{BaLu11}.

This paper is structured as follows. In Section \ref{sec:2} subspaces which intersect each element of a regulus in one point are studied and a result from \cite{Burau} is generalised. Section \ref{sec:3} contains one of the main results of this paper, determining the order of the normal rational curves obtained from $n$-dimensional subspaces on an external $(n-1)$-dimensional subspace with respect to a regulus in $\PG(2n-1,q)$, obtained from a point and a subline after applying the field reduction map to $\PG(1,q^n)$. This leads to a necessary condition on the existence of a Desarguesian spread containing a subspace and regulus (Corollary \ref{cor1}). The Andr\'e-Bruck-Bose representation of sublines and subplanes of a finite projective
plane is studied in Section \ref{sec:4} and improvements are obtained
with respect to the known results  \cite{BFG,QuCa,Vi,BaJa12}. The results from the first sections are then applied to the classification problem for clubs of rank three in $\PG(1,q^n)$ in Section \ref{sec:clubs}. 
A study of the incidence structure of the clubs in $\PG(1,q^n)$ after field reduction yields to a partial
classification, concluding that the orbits of clubs under $\PGL(2,q^n)$ are at least
$k-1$, where $k$ stands for the number of divisors of $n$.
The paper concludes with an appendix discussing a result motivated by Burau \cite{Burau} for the complex numbers: the result is extended to general algebraically closed fields; a new proof is provided; and counterexamples are given to some of the arguments used in the original proof.

\section{Subspaces intersecting each element of a regulus in one point}\label{sec:2}

Let $\cR$ be a regulus of subspaces in a projective space and let $S$ be any subspace of $\langle \cR\rangle$.
Questions about the properties of the set of intersection points, which for reasons of simplicity of notation we will denote by $S\cap \cR$, often
turn up while investigating objects in finite geometry.
If $S$ intersects each element of the regulus $\cR$ in a point, then the intersection 
$S\cap \cR$ is a normal rational curve, see Lemma \ref{lemma:0}. This was already pointed out in 
\cite[p.173]{Burau} with a proof originally intended for complex projective spaces, but actually 
holding in a more general setting. The
notation of \cite{Burau} will be partly adopted.

The Segre variety representing the Cartesian product $\PG(n,F)\times\PG(m,F)$ in
$\PG((n+1)(m+1)-1,F)$ is denoted by $\segre nmF$. It is well known that
$\segre nmF$ contains two families $\segre nmF^I$ and $\segre nmF^{II}$ 
of maximal subspaces of dimensions $n$ and $m$, respectively. When convenient,
the notation $S^I$ or $S^{II}$ will be used for a subspace belonging to the first or second family.
The points of $\segre nmF$ may be represented as one-dimensional subspaces spanned by rank one 
$(m+1)\times(n+1)$ matrices.
This is the standard example of a regular embedding of product spaces, see \cite{Za96}.
Note that in the finite case it is possible to embed product spaces in projective spaces of smaller dimension (see e.g. \cite{LaShZa2013}).
A regulus $\cR$ of $(n-1)$-dimensional subspaces can also be defined as $\segre {n-1}1F^I$.

\begin{lemma}\label{lemma:0}
  Let $n>1$ be an integer, and $F$ a field.
  Let $S_t$ be a $t$-subspace of $\PG(2n-1,F)$ intersecting each $S^I\in\segre{n-1}1F^I$ in precisely one point.
  Define $\Phi=S_t\cap\segre{n-1}1F$, and
  assume $\langle\Phi\rangle=S_t$.
 Then $|F|\geq t$ and the following properties hold.
  \begin{itemize}
  \item[(i)] The set $\Phi$ is a normal rational curve of order $t$.
  \item[(ii)] Let $\Xi^I\in\segre{n-1}1F^I$.
  Then the set $S(\Phi,\Xi^I)$ of the intersections of $\Xi^I$ with all transversal lines $l^{II}$
  such that $l^{II}\cap\Phi\neq\emptyset$ is a normal rational curve of order
  $t$ or $t-1$ if $|F|=t$, and of order $t-1$ if $|F|>t$.
  \item[(iii)] If $\Phi$ is contained in a subvariety $\cS_{t-1,1,F}$ of $\cS_{n-1,1,F}$,
  then homogeneous coordinates can be chosen
  such that $\Phi$ is represented parametrically by
  \begin{equation}\label{proj-1}
    \left\langle\begin{pmatrix}
    y_0^t& y_0^{t-1}y_1 &\ldots &y_0y_1^{t-1}\\
    y_0^{t-1}y_1&y_0^{t-2}y_1^2&\ldots &y_1^t\end{pmatrix}\right\rangle,\quad 
    (y_0,y_1)\in(F^2)^*,
  \end{equation}
  and $S(\Phi,\Xi^I)$, for $z_0$, $z_1$ depending only on $\Xi^I$, by
  \begin{equation}\label{proj-2}
    \left\langle\begin{pmatrix}
    y_0^{t-1}z_0& y_0^{t-2}y_1z_0 &\ldots &y_1^{t-1}z_0\\
    y_0^{t-1}z_1&y_0^{t-2}y_1z_1&\ldots &y_1^{t-1}z_1\end{pmatrix}\right\rangle,\quad 
    (y_0,y_1)\in(F^2)^*.
  \end{equation}
  \end{itemize}
\end{lemma}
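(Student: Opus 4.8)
My plan is to fix coordinates so that points of $\PG(2n-1,F)$ are $2\times n$ matrices, $\segre{n-1}1F$ is the variety of rank-one matrices, $\segre{n-1}1F^I=\{S^I_{(a:b)}:(a:b)\in\PG(1,F)\}$ with $S^I_{(a:b)}=\{\langle\binom ab\mathbf v\rangle:\mathbf v\in F^n\setminus\{0\}\}$, and the transversal lines are $l^{II}_{[\mathbf v]}=\{\langle\binom ab\mathbf v\rangle:(a:b)\in\PG(1,F)\}$. Since each point of $\segre{n-1}1F$ lies on a unique element of the regulus and $S_t$ meets each such element in exactly one point, $\Phi$ consists of $|F|+1$ points, one on each $S^I_{(a:b)}$, and there is a well-defined map $(a:b)\mapsto Q_{(a:b)}=\langle\binom ab\mathbf v(a,b)\rangle$ with $\mathbf v(a,b)\in F^n\setminus\{0\}$ whose image is $\Phi$. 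As a preliminary I would record the dimension count $\dim(S_t\cap S^I)=t+(n-1)-\dim\langle S_t,S^I\rangle=0$, which forces $t\le n$, so that the linear systems below are not overdetermined in the wrong direction.

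The heart of the argument is to put $W:=\langle\Phi\rangle\subseteq F^{2n}$ (so $\PG(W)=S_t$) into normal form. The two projections $p_1,p_2\colon W\to F^n$ onto the two rows have $1$-dimensional kernels, namely $W\cap S^I_{(0:1)}=\langle(0,\mathbf w)\rangle$ and $W\cap S^I_{(1:0)}$; hence $W'=p_1(W)$ has dimension $t$, and choosing a section of $p_1|_W$ we may write $W=\{(\mathbf x,\psi(\mathbf x)+s\mathbf w):\mathbf x\in W',\ s\in F\}$ for a linear map $\psi\colon W'\to F^n$. The condition $\langle\binom ab\mathbf v\rangle\in W$ then reads $(a\psi-b\iota)(\mathbf v)\in\langle\mathbf w\rangle$, $\mathbf v\in W'$ (with $\iota\colon W'\hookrightarrow F^n$): a homogeneous linear system in $\mathbf v$ with coefficient matrix $M(a,b)$ whose entries are linear in $(a,b)$. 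The hypothesis says its solution space is $1$-dimensional for every $(a:b)$, so $M(a,b)$ has rank $t-1$ throughout $\PG(1,F)$ and, by Cramer's rule, $\mathbf v(a,b)$ may be taken to be the vector of its maximal minors, i.e.\ a vector of binary forms of degree $t-1$. Hence $Q_{(a:b)}=\binom ab\mathbf v(a,b)$ is given by $2n$ binary forms of degree $t$; since $\langle\Phi\rangle=S_t$ has vector dimension $t+1$, these forms span a $(t+1)$-dimensional subspace of the $(t+1)$-dimensional space of binary forms of degree $t$, hence all of it, so they have no common factor and $(a:b)\mapsto Q_{(a:b)}$ is projectively equivalent to the Veronese map $\PG(1,F)\to\PG(t,F)$. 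This proves (i), and injectivity of that map gives $|F|+1=|\Phi|\ge t+1$, i.e.\ $|F|\ge t$.

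For (ii) I would re-use the parametrization: the transversal line through $Q_{(a:b)}$ is $l^{II}_{[\mathbf v(a,b)]}$, and it meets $\Xi^I=S^I_{(z_0:z_1)}$ in $\langle\binom{z_0}{z_1}\mathbf v(a,b)\rangle$, so $S(\Phi,\Xi^I)$ is the image of $(a:b)\mapsto\langle\binom{z_0}{z_1}\mathbf v(a,b)\rangle$, parametrized by binary forms of degree $t-1$ — one less than for $\Phi$, because of the factored shape $\binom ab\mathbf v(a,b)$. Since the $\mathbf v(a,b)$ lie in $W'$ and span it (applying $p_1$ to the $Q_{(a:b)}$ shows $\langle a\mathbf v(a,b)\rangle=W'$), these degree-$(t-1)$ forms span the full $t$-dimensional space of such forms and $S(\Phi,\Xi^I)$ is again projectively a Veronese image, i.e.\ a normal rational curve of order $t-1$; the remaining slack in the statement, the alternative value $t$, occurs in the borderline case $|F|=t$, which I would treat separately and where the identity $s^{|F|}=s$ blurs the distinction between the degree-$(t-1)$ and degree-$t$ pictures. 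Part (iii) is then bookkeeping: under its hypothesis the transversal projection of $\Phi$ spans exactly a $\PG(t-1,F)$, so inside $\langle\segre{t-1}1F\rangle$ one chooses coordinates with $\mathbf v(y_0,y_1)=(y_0^{t-1},y_0^{t-2}y_1,\dots,y_1^{t-1})$ the standard normal rational curve of order $t-1$, and substituting this into $\binom{y_0}{y_1}\mathbf v(y_0,y_1)$ and $\binom{z_0}{z_1}\mathbf v(y_0,y_1)$ yields \eqref{proj-1} and \eqref{proj-2}.

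The main obstacle is the middle step: establishing the normal form $W=\{(\mathbf x,\psi(\mathbf x)+s\mathbf w)\}$ cleanly and, crucially, verifying that $M(a,b)$ has constant rank $t-1$ over all of $\PG(1,F)$ — including checking that the Cramer vector $\mathbf v(a,b)$ neither vanishes identically at the special fibres $(1:0),(0:1)$ nor acquires a spurious common factor (which, in view of $\langle\Phi\rangle=S_t$, is what would otherwise break the degree count). Once constant rank is in hand, the spanning condition $\langle\Phi\rangle=S_t$ does the rest essentially formally. A second, more technical point is justifying precisely why the exceptional order $t$ in (ii) can occur only when $|F|=t$.
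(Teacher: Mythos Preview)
Your approach is self-contained and quite different from the paper's, which essentially cites Burau for the case $|F|>t$ (including the containment $\Phi\subset\cS_{t-1,1,F}$ and the parametrizations (iii)) and disposes of $|F|=t$ by the frame argument. Your Cramer-rule route is more explicit and would give (iii) directly rather than by quotation.

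The obstacle you flag is real, and it is the genuine missing step rather than a technicality. The assertion that $M(a,b)$ has rank $t-1$ throughout $\PG(1,F)$ can fail at $(0:1)$ precisely when $\mathbf w\notin W'$: there the map $W'\to F^n/\langle\mathbf w\rangle$, $\mathbf v\mapsto -\mathbf v\pmod{\mathbf w}$, is injective, so the rank is $t$, and no Cramer vector computed inside $W'$ can recover the point $\langle(0,\mathbf w)\rangle$; both degree counts then collapse. The idea you are missing is a determinantal one: every $t\times t$ minor of $M(a,b)$ is a binary form of degree $t$ which vanishes at the $|F|$ parameters with $a\neq0$ (where the rank is $t-1$); hence, if $|F|>t$, all these minors vanish identically, the rank at $(0:1)$ is also at most $t-1$, and therefore $\mathbf w\in W'$. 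This is exactly the statement $\Phi\subset\cS_{t-1,1,F}$ that Burau proves and the paper cites. Once you have it, work inside $\langle\cS_{t-1,1,F}\rangle$: there $M(a,b)$ is $(t-1)\times t$, the one-point-intersection hypothesis forces rank exactly $t-1$ everywhere, the (now unique) Cramer vector is globally nonvanishing of degree $t-1$, and (i)--(iii) follow as you outline. Conversely, when $|F|=t$ the minor argument does not apply, $\mathbf w\notin W'$ can occur, and then the $|F|+1$ column vectors span the $(t+1)$-dimensional space $W'+\langle\mathbf w\rangle$, giving the order-$t$ alternative in (ii); this is the case exhibited in the paper's Remark.
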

\begin{proof}
  \textit{(i), (iii)} The proof in \cite[Sect.41 no.3]{Burau}, which is offered for $F=\mathbb C$, works exactly the
  same provided that $|F|>t$ or, more generally, that $\Phi$ is contained in some subvariety
  $\cS_{t-1,1,F}$ of $\cS_{n-1,1,F}$.
  In case $|F|\le t$, the size of $\Phi$ being $|F|+1$ implies $|F|=t$, so $\Phi$ is just a set
  of $t+1$ independent points in a subspace isomorphic to $\PG(t,t)$, hence $\Phi$ is a normal rational
  curve of order $t$.
  
  \textit{(ii)} The case $|F|>t$ is proved in \cite{Burau} immediately after the corollary at p.\ 175.
  If $|F|\leq t$, then $|F|=t$ and two cases are possible.
  If $\Phi$ is contained in some $\cS_{t-1,1,F}\subseteq\cS_{n-1,1,F}$, Burau's proof is still valid as
 was mentioned in case (ii); so, $S(\Phi,\Xi^I)$ is a normal rational curve of order $t-1=|F|-1$.
  Otherwise $S(\Phi,\Xi^I)$ is an independent $(t+1)$-set, hence a normal rational curve of order $|F|$.
\end{proof}
\begin{remark}
\emph{
  If $|F|=t$ both cases in Lemma \ref{lemma:0} \textit{(ii)} can occur. The following two examples use the 
  Segre embedding $\sigma=\sigma_{t-1,1,F}$ of the product space $\PG(t-1,t) \times \PG(1,t)$ in $\PG(2t-1,t)$.
  Let $\{s_0,s_1,\ldots,s_t\}$ be the set of points on $\PG(1,t)$ and suppose $\{r_0,r_1,\ldots,r_t\}$ is a set of 
  $t+1$ points in $\PG(t-1,t)$. Put $\Xi^I=\sigma(\PG(1,t)\times s_0)$ and $\Phi:=\{\sigma(r_i\times s_i)~:~ i=0,1,\ldots, t\}$.
  Then $\Phi$ consists of $t+1$ points on the Segre variety $\cS_{t-1,1,F}$. Depending on the set $\{r_0,r_1,\ldots,r_t\}$ one obtains the two cases described in Lemma \ref{lemma:0} \textit{(ii)}.
  \begin{itemize}
  \item[a.] If $\{r_0,r_1,\ldots,r_t\}$ is a frame of a hyperplane of $\PG(t-1,t)$ then $\Phi$ generates a 
  $t$-dimensional subspace of $\PG(2t-1,t)$ intersecting $\cS_{t-1,1,F}$ in $\Phi$ and $S(\Phi,\Xi^I)$ is a normal rational curve of
  order $t-1$.
  \item[b.] If $\{r_0,r_1,\ldots,r_t\}$ generates $\PG(t-1,t)$ then $\Phi$ generates a $t$-dimensional
  subspace of $\PG(2t-1,t)$ intersecting $\cS_{t-1,1,F}$ in $\Phi$ and $S(\Phi,\Xi^I)$ is a normal rational curve of
  order $t$.
  \end{itemize}
}
\end{remark}

\begin{remark}\label{proj-is-proj}\emph{
  By (\ref{proj-1}) and (\ref{proj-2}), the map $\alpha:\,\Phi\rightarrow S(\Phi,\Xi^I)$ defined by the condition
  that $X$ and $X^\alpha$ are on a common line in 
  $\cS_{n-1,1,F}^{II}$ is related to a projectivity between the parametrizing
  projective lines.
  Such an $\alpha$ is also called a {\it projectivity}.
  }
\end{remark}

\section{The order of normal rational curves contained in $\cS_{n-1,1,q}$}\label{sec:3}
Here $n\ge2$ is an integer.
The field reduction map $\cF_{m,n,q}$ from $\PG(m-1,q^n)$ to $\PG(mn-1,q)$ will also be denoted by $\cF$.
If $S$ is a set of points, in $\PG(m-1,q^n)$, then $\cF(S)$ is a set of subspaces, whose union, as a set of points
will be denoted by $\tilde\cF(S)$.
The $\F_{q^h}$-span of a subset $b$ of $\PG(d,q^n)$ is denoted by $\langle b\rangle_{q^h}$.
\begin{prop}\label{prop:grado_pto}
  Let $b$ be a $q$-subline of $\PG(1,q^n)$, and let $\Theta\not\in b$ be a point of $\PG(1,q^n)$.
  Let $1$, $\zeta$ and $1$, $\zeta'$ be homogeneous coordinates of $\Theta$ with respect to two
  reference frames for $\langle b\rangle_{q^n}$, each of which consists of three points of $b$.
  Then $\Fq(\zeta)=\Fq(\zeta')$.
\end{prop}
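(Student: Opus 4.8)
The plan is to reduce the statement to the well-known fact that two different frames of a projective line are related by a projectivity, and then to track how the coordinate $\zeta$ of the fixed point $\Theta$ transforms. Concretely, write $b = \{P_0,P_1,P_2,\ldots\}$ and let $\mathcal{B}_1 = (R_0,R_1,R_2)$ and $\mathcal{B}_2 = (R_0',R_1',R_2')$ be the two ordered triples of points of $b$ used to build the two reference frames for the line $\langle b\rangle_{q^n} = \PG(1,q^n)$. Since both triples lie on the $q$-subline $b$, the unique element $\varphi \in \PGL(2,q^n)$ sending $\mathcal{B}_1$ to $\mathcal{B}_2$ (as ordered frames) maps $b$ to $b$: indeed $\varphi$ restricted to $\langle b \rangle_{q^n}$ is determined by its action on three points of $b$, and there is a projectivity of $\PG(1,q^n)$ fixing $b$ setwise and realizing that action (because $\PGL(2,q)$ acts as the full projectivity group on the $q$-subline $b$, and every such projectivity of $b$ extends to $\PG(1,q^n)$). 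Hence $\varphi$ can be represented by a matrix $M = \begin{pmatrix} a & b \\ c & d \end{pmatrix}$ with entries in $\Fq$ (up to a scalar in $\F_{q^n}^*$, which we may absorb).

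Next I would compute the coordinate change explicitly. If $\Theta$ has coordinates $(1,\zeta)$ with respect to $\mathcal{B}_1$, then with respect to $\mathcal{B}_2$ it has coordinates proportional to $M \cdot (1,\zeta)^{\mathrm t} = (a + b\zeta,\ c + d\zeta)^{\mathrm t}$, so that
\[
  \zeta' \;=\; \frac{c + d\zeta}{a + b\zeta},\qquad a,b,c,d\in\Fq,\quad ad-bc\neq 0.
\]
(The denominator $a+b\zeta$ is nonzero precisely because $\Theta\notin b$ forces $M^{-1}$ to send the point at the appropriate coordinate off the "infinity" of the new frame; in any case if $a+b\zeta=0$ one swaps the roles of the two coordinates, which does not affect the field generated.) From this formula it is immediate that $\zeta' \in \Fq(\zeta)$, and solving for $\zeta$ gives $\zeta = \frac{-a\zeta' + c}{b\zeta' - d} \in \Fq(\zeta')$; hence $\Fq(\zeta) = \Fq(\zeta')$.

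The one point that requires care — and which I expect to be the only genuine obstacle — is the justification that the frame-change projectivity $\varphi$ can be taken with coefficients in $\Fq$, i.e. that a projectivity of $\PG(1,q^n)$ mapping one triple of points of $b$ to another triple of points of $b$ preserves $b$ and is induced by an element of $\GL(2,q)$. This follows from the standard description of a $q$-subline: after a change of coordinates we may assume $b = \PG(1,q) = \{(1,x) : x\in\Fq\}\cup\{(0,1)\}$, and then the subgroup of $\PGL(2,q^n)$ fixing $b$ setwise and acting on it is exactly $\PGL(2,q)$, represented by matrices over $\Fq$. Since both ordered triples consist of distinct points of $b$, the unique projectivity between them lies in this subgroup. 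With that fact in hand, the computation above closes the argument; note that it also shows the slightly stronger statement that $\zeta$ and $\zeta'$ are related by an $\Fq$-linear fractional transformation, so in fact $\Fq(\zeta)$ and $\Fq(\zeta')$ are literally equal as subfields of $\F_{q^n}$, not merely isomorphic.
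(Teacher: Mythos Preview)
Your proposal is correct and follows essentially the same approach as the paper: both argue that the change of frame is given by a matrix $A\in\GL(2,q)$ (up to an $\F_{q^n}^*$-scalar), apply it to $(1,\zeta)^T$, and read off that $\zeta'\in\Fq(\zeta)$, then reverse the roles. The paper does this in three lines without writing out the linear fractional form or elaborating on why $A$ may be taken over $\Fq$; your additional justification of that point and your explicit check that $a+b\zeta\neq0$ (which follows from $\zeta\notin\Fq$ since $\Theta\notin b$) are fine but not strictly needed.
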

\begin{proof}
  Homogeneous coordinates of a point in both reference frames, say $(x_0,x_1)$ and $(x_0',x_1')$,
  are related by an equation of the form $\rho(x_0'\ x_1')^T=A(x_0\ x_1)^T$, $\rho\in\F^*_{q^n}$,
  $A\in\GL(2,q)$.
  Hence $(\rho\ \rho\zeta')^T=A(1\ \zeta)^T$ and this implies $\zeta'\in\Fq(\zeta)$.
  The proof of $\zeta\in\Fq(\zeta')$ is similar.
\end{proof}
By Proposition\ \ref{prop:grado_pto}, the \textit{degree of a point over a $q$-subline} $b$
in a finite projective space $\PG(d,q^n)$, $[\Theta:b]=[\Fq(\zeta):\Fq]$
for $\Theta\in\langle b\rangle_{q^n}\setminus b$, $[\Theta:b]=1$ for $\Theta\in b$, is
well-defined.
This $[\Theta:b]$ also equals the minimum integer $m$ such that a subgeometry $\Sigma\cong\PG(d,q^m)$
exists containing both $b$ and $\Theta$.
\begin{prop}\label{zero-}
  Any $n$-subspace of $\PG(2n-1,q)$ containing an $(n-1)$-subspace 
  $S^I\in\cS_{n-1,1,q}^I$ intersects $\cS_{n-1,1,q}$
  in the union of $S^I$ and a line in $\cS_{n-1,1,q}^{II}$.
\end{prop}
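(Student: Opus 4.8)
The plan is to work directly with the matrix model of the Segre variety $\cS_{n-1,1,q}$ described before Lemma~\ref{lemma:0}: points of $\cS_{n-1,1,q}$ are the $1$-subspaces of $\PG(2n-1,q)$ spanned by rank-one $2\times n$ matrices over $\Fq$. An element $S^I$ of the first family corresponds to a vector $v\in F^n\setminus\{0\}$ and consists of all points $\langle (a\, v,\, b\, v)^T\rangle$ with $(a,b)\in (F^2)^*$; equivalently, after a change of coordinates fixing the two families, we may assume $S^I$ is the subspace of matrices whose columns are all proportional to $e_1=(1,0,\dots,0)$, i.e. matrices of the form $\begin{pmatrix} a & 0 & \cdots & 0\\ b & 0 & \cdots & 0\end{pmatrix}$.

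**The intersection.** Let $T$ be an $n$-subspace of $\PG(2n-1,q)$ with $S^I\subseteq T$. First I would observe that $T\cap\cS_{n-1,1,q}$ certainly contains $S^I$; the content is to identify the rest. Since $\dim T = n$ and $\dim S^I = n-1$, on the level of vector spaces $T$ is an $(n+1)$-dimensional space containing the $n$-dimensional space underlying $S^I$, so $T = \widehat{S^I} \oplus \langle w\rangle$ for a single extra vector $w = (p_1,\dots,p_n, q_1,\dots,q_n)^T \in F^{2n}$. A general point of $T$ is $\langle (a e_1 + \lambda p,\, b e_1 + \lambda q)^T\rangle$ with $p=(p_1,\dots,p_n)$, $q=(q_1,\dots,q_n)$; this matrix has rank one precisely when its two rows are proportional. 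Writing out the $2\times 2$ minors, for a point not already in $S^I$ (i.e. $\lambda\ne 0$, rescale $\lambda=1$) the rank-one condition forces the two rows $ae_1+p$ and $be_1+q$ to be parallel. The key step is then: in coordinates $2,\dots,n$ the rows read $(p_2,\dots,p_n)$ and $(q_2,\dots,q_n)$, which are fixed vectors, so proportionality in those coordinates is a closed linear condition; combined with freedom in $a,b$ to adjust coordinate $1$, one finds that the locus of such points is either empty or a single line belonging to the second family $\cS_{n-1,1,q}^{II}$ — namely the transversal line $l^{II}$ through the point $\langle (p,q)^T\rangle$ (after possibly adjusting by $S^I$), provided $(p_2,\dots,p_n)$ and $(q_2,\dots,q_n)$ are proportional; and one must check this latter proportionality does hold because $T$ meets every transversal line. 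I would argue the nondegenerate case occurs: $T$ has dimension $n$ and each transversal line $l^{II}$ meets $\langle\cS^I_{n-1,1,q}\rangle$ appropriately, so by a dimension count $T$ must meet $\cS_{n-1,1,q}^{II}$ in at least a line, and the minor computation shows it meets it in exactly a line and nothing more.

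**Dimension count as the clean alternative.** A cleaner route, which I would actually prefer, is to invoke Lemma~\ref{lemma:0} indirectly via a counting/spanning argument. The union $S^I\cup l^{II}$ for a transversal line $l^{II}$ meeting $S^I$ spans exactly an $n$-subspace (the $(n-1)$-space plus one independent point off it). Conversely, given $T\supseteq S^I$ of dimension $n$, pick any point $P\in T\setminus S^I$; the transversal line $l^{II}_P$ of $\cS_{n-1,1,q}^{II}$ through... — here the subtlety is that $P$ need not lie on $\cS_{n-1,1,q}$, so there is no canonical transversal through it. Instead I would take the line $m = \langle S^I\text{-point}, P\rangle$ for a suitable choice and show $m$ meets $\cS_{n-1,1,q}$ in a point of $\cS^{II}$-type; iterating, $T\cap\cS_{n-1,1,q}$ is swept out by such lines. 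The honest obstacle is bookkeeping: showing that the quadratic equations cutting out $\cS_{n-1,1,q}$, restricted to the linear space $T$, degenerate to exactly "the line $S^I$ union one line of the other ruling" and do not pick up extra components or miss the second line. I expect this to reduce, after the coordinate normalization above, to the elementary fact that a rank-one $2\times n$ matrix with its first column arbitrary and remaining columns lying in a fixed $1$-dimensional subspace of $F^2\times\cdots$ forms precisely such a configuration.

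**Summary of the plan.** So the steps are: (1) normalize coordinates so $S^I$ is the "first-column" $(n-1)$-subspace; (2) write $T = \widehat{S^I}\oplus\langle w\rangle$ and parametrize points of $T$; (3) impose the rank-one (vanishing-$2\times2$-minors) condition and read off that the extra points form a single line of the second family; (4) check this line is genuinely present (nonempty), e.g. via a dimension argument using that transversal lines of $\cS^{II}_{n-1,1,q}$ fill out $\langle\cS_{n-1,1,q}\rangle$. The main obstacle is step (3): making the minor computation clean enough to conclude "exactly a line, no more, no less" without a long case analysis, and handling the possibility that $w$ is chosen so that the $2\times n$ truncation is already degenerate.
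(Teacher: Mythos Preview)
The paper states this proposition without proof, so there is no argument to compare against directly. Your matrix-model approach is the natural one and does work, but you have interchanged the two families of the Segre variety, and this error propagates: your computation does not actually parametrise an $n$-subspace. In $\cS_{n-1,1,q}$ (rank-one $2\times n$ matrices $uv^T$ with $u\in F^2$, $v\in F^n$), the subspaces of the first family have dimension $n-1$ and are obtained by \emph{fixing $u$} and varying $v$; the transversal lines of the second family fix $v$ and vary $u$. Your normalisation, matrices of the form $\left(\begin{smallmatrix} a & 0 & \cdots & 0\\ b & 0 & \cdots & 0\end{smallmatrix}\right)$, fixes $v=e_1$ and varies $u=(a,b)$, hence describes a \emph{line}, not an $(n-1)$-subspace. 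Consistently, your ``general point of $T$'', namely $\langle (a e_1 + \lambda p,\, b e_1 + \lambda q)\rangle$, depends on only the three scalars $a,b,\lambda$ and so sweeps out at most a plane, contradicting your own statement that $\dim T=n$.

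With the families set straight the argument becomes a two-line computation with none of the obstacles you anticipate. Take $S^I$ to be the set of matrices with vanishing second row, i.e.\ points $\langle(v,0)\rangle$ with $v\in (F^n)^*$; this is genuinely $(n-1)$-dimensional. Then $\widehat T=\widehat{S^I}\oplus\langle w\rangle$ with $w=(p,q)$ and $q\neq 0$; adjusting $w$ modulo $\widehat{S^I}$ one may take $p=0$. A general point of $T$ is $\langle (v,\lambda q)\rangle$ with $(v,\lambda)\in (F^{n+1})^*$, and this matrix has rank one iff either $\lambda=0$ (the point lies in $S^I$) or $v$ is a scalar multiple of $q$. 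The latter locus is exactly the transversal line $\ell^{II}=\{\langle(\mu q,\lambda q)\rangle:(\mu,\lambda)\in(F^2)^*\}\in\cS_{n-1,1,q}^{II}$, which meets $S^I$ at $\langle(q,0)\rangle$ and is therefore always present. Hence $T\cap\cS_{n-1,1,q}=S^I\cup\ell^{II}$ with no residual case analysis and no separate nonemptiness check needed.
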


\begin{theorem}\label{zero}
  Let $b$ be a $q$-subline of $\PG(1,q^n)$, and $\Theta\not\in b$ a point of $\PG(1,q^n)$.
  Then in $\PG(2n-1,q)$ any $n$-subspace $\cH$ containing $\cF(\Theta)$ intersects the Segre variety
  $\cS_{n-1,1,q}=\tilde{\cF}(b)$, in a normal rational curve whose order is $\min\{q,[\Theta:b]\}$.
\end{theorem}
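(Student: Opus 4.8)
The plan is to combine the field-reduction geometry with Lemma \ref{lemma:0} and Proposition \ref{zero-}. Set $\cR=\cS_{n-1,1,q}^I=\cF(b)$, the regulus of $(n-1)$-subspaces obtained by field reduction from the $q$-subline $b$, and write $\langle\cR\rangle=\PG(2n-1,q)$. Since $\Theta\notin b$, the $(n-1)$-subspace $\cF(\Theta)$ is disjoint from $\langle b\rangle$'s image in the sense that it is not an element of the regulus; moreover $\cF(\Theta)$ is a transversal space, meeting each element of $\cR$ in exactly one point, because two distinct points of $\PG(1,q^n)$ induce complementary $\F_q$-subspaces. Now let $\cH$ be any $n$-subspace containing $\cF(\Theta)$. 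Because $\dim\cH=n$ and $\cH\supseteq\cF(\Theta)$ which is an $(n-1)$-space transversal to $\cR$, the intersection $\Phi:=\cH\cap\cS_{n-1,1,q}$ is exactly the set of points in which $\cH$ meets the elements $S^I$ of $\cR$: indeed $\cH$ meets each $S^I$ in at least a point (dimension count in $\PG(2n-1,q)$: $n+(n-1)-(2n-1)=0$) and in at most a point (otherwise $\cH$ would contain a line of an element of $\cR$, forcing $\cF(\Theta)\subseteq\cH$ to meet that $S^I$ in more than a point, contradiction). Hence $\cH$ meets each $S^I$ in exactly one point, and $\Phi=S_t\cap\cS_{n-1,1,q}$ in the notation of Lemma \ref{lemma:0} with $S_t$ a subspace of dimension $t\le n-1$ spanned by $\Phi$ — here $t=\dim\langle\Phi\rangle$.

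The next step is to identify $t$ with $\min\{q,[\Theta:b]\}$ and to invoke Lemma \ref{lemma:0}(i) to conclude that $\Phi$ is a normal rational curve of order $t$. For the identification, I would argue as follows. By Proposition \ref{zero-}, each $n$-subspace through a fixed $S^I\in\cR$ meets $\cS_{n-1,1,q}$ in $S^I$ together with a line of $\cS_{n-1,1,q}^{II}$; dually, the transversal lines $l^{II}$ of the regulus are exactly the elements of $\cS_{n-1,1,q}^{II}$, and $\cH\cap S^I$ varies over a subset of $S^I$ spanning a subspace of dimension $t$. Translating back through $\cF$: the point $\cH\cap\cF(P)$, for $P\in b$, together with $\Theta$, must lie in the smallest subgeometry $\PG(1,q^m)$ of $\PG(1,q^n)$ containing $b$ and $\Theta$; by the remark after Proposition \ref{prop:grado_pto} the minimal such $m$ is $[\Theta:b]$, and field reduction $\cF_{2,\,?}$ applied inside that subgeometry shows that $\Phi$ lives in a subvariety $\cS_{m-1,1,q}$ (when $m\le n$) — more precisely in $\cS_{t-1,1,q}$ with $t=\min\{q,m\}$, the cap at $q$ coming from the fact that a normal rational curve of order $t$ needs $t+1$ points, i.e. $|b|=q+1\ge t+1$, so $t\le q$ always. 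Then part (iii) of Lemma \ref{lemma:0} gives the explicit parametrization, and part (i) gives the order.

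The main obstacle I anticipate is the precise bookkeeping that pins down $t=\dim\langle\Phi\rangle$ as exactly $\min\{q,[\Theta:b]\}$, rather than merely bounding it. The inequality $t\le q$ is automatic from $|\Phi|=|b|=q+1$. For the other direction one has to show that $\langle\Phi\rangle$ cannot be \emph{too} small, i.e. that $\Phi$ genuinely spans a $[\Theta:b]$-dimensional space (capped at $q$): this is where the hypothesis that $\cH$ contains \emph{all} of $\cF(\Theta)$, not just a point of it, is essential, since it forces the subgeometry generated by $b$ and $\Theta$ to be the ambient one for the configuration, and no smaller $\cS_{s-1,1,q}$ with $s<\min\{q,[\Theta:b]\}$ can contain both the transversal $\cF(\Theta)$ and the regulus $\cR$. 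I would make this rigorous by a coordinate computation: choose a reference frame for $\langle b\rangle_{q^n}$ in three points of $b$, so $\Theta$ has coordinates $(1,\zeta)$ with $[\Theta:b]=[\F_q(\zeta):\F_q]$; apply $\cF$ explicitly, writing $\F_{q^n}$ over $\F_q$, so that $\cF(\Theta)$ and the generic element of $\cR$ become explicit matrices; then the span of the intersection points is governed by $1,\zeta,\zeta^2,\dots$, whose $\F_q$-dimension is exactly $\deg_{\F_q}\zeta$, truncated at $q$ because only $q+1$ values of the subline parameter are available. Once this dimension count is in place, Lemma \ref{lemma:0} does the rest and the theorem follows, the two clauses ``order $q$'' and ``order $[\Theta:b]$'' corresponding respectively to $[\Theta:b]\ge q$ and $[\Theta:b]<q$.
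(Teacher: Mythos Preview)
Your setup contains a geometric error that undermines the argument. You claim that $\cF(\Theta)$ is a transversal space to the regulus $\cR=\cF(b)$, meeting each $S^I\in\cR$ in exactly one point. This is false: $\cF(\Theta)$ and each $\cF(P)$ for $P\in b$ are distinct elements of the Desarguesian spread $\cF(\PG(1,q^n))$, hence pairwise \emph{disjoint}. The complementarity of two spread elements means they are skew and together span $\PG(2n-1,q)$, not that one is transversal to a regulus containing the other. Your ``at most one point'' step survives (if $\cH\cap S^I$ were a line, it would force $\cF(\Theta)\cap S^I\neq\emptyset$, contradicting disjointness), but the phrasing ``more than a point'' reveals the confusion, and more importantly the later hand-waving about subgeometries relies on this mistaken picture. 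You also write $t\le n-1$, but in fact $t$ can be $n$ (when $[\Theta:b]=n\le q$).

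The more serious gap is the identification $t=\min\{q,[\Theta:b]\}$. You propose to read it off from ``$1,\zeta,\zeta^2,\dots$'', but if you actually compute a point of $\Phi$ --- say $\cH=\langle\cF(\Theta),F(\theta,0)\rangle_q$ meets $L(x,1)$ in $F\bigl(x\theta(x-\xi^{-1})^{-1},\,\theta(x-\xi^{-1})^{-1}\bigr)$ --- you get \emph{inverses} $(x-\xi^{-1})^{-1}$, not powers of $\zeta$. Controlling the $\F_q$-span of such inverses is exactly the content of the inversion result \cite[Theorem~5]{LaZa14}, which you do not invoke. The paper's proof proceeds differently: rather than computing $\dim\langle\Phi\rangle$ directly, it projects $\Phi$ along the transversal lines onto a fixed $\Xi^I\in\cR$, obtaining $S(\Phi,\Xi^I)$ explicitly as an inversion image of a line; the cited theorem then gives the order $\delta'=\min\{q,[\Theta:b]-1\}$ of that projected curve, and Lemma~\ref{lemma:0}(ii) is used \emph{backwards} to deduce $t$ from $\delta'$ (splitting into $t\ge q$ and $t<q$). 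Your plan correctly identifies Lemma~\ref{lemma:0} as the endgame, but it lacks the mechanism --- the inversion computation on the projected curve --- that actually pins down $t$.
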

\begin{proof}
  Set $L=\F_{q^n}$, $F=\F_q$.
  Without loss of generality, $\PG(2n-1,q)=\PG_q(L^2)$,
  $\cF(b)=\{L(x,y)\mid(x,y)\in(F^2)^*\}$\footnote{For $x,y\in L$, $F(x,y)=\langle(x,y)\rangle_q$, and
  $L(x,y)=\langle(x,y)\rangle_{q^n}$.}, and $\Theta=L(1,\xi)$ with $[F(\xi):F]=[\Theta:b]$.
  The $n$-subspace $\cH$ intersects $L(1,0)$ in one point $Y$ of the form
  $Y=F(\theta,0)$, $\theta\in L^*$.
  For any $x\in F$, seeking for the intersection
  $\langle\cF(\Theta),Y\rangle_q\cap L(x,1)$, or
  \[
    \langle L(1,\xi),F(\theta,0)\rangle_q\cap L(x,1)
  \]
  gives two equations in $\alpha,\beta\in L$:
  \[
    \alpha+\theta=\beta x,\quad \alpha\xi=\beta,
  \]
  whence $\beta=\theta(x-\xi^{-1})^{-1}$.
  The intersection point is then $F\left(x\theta(x-\xi^{-1})^{-1},\theta(x-\xi^{-1})^{-1}\right)$.
  So, for $\Xi=L(0,1)$, the set  of the intersections of  
  $\Xi$ with all lines in $\cS_{n-1,1,q}^{II}$ which meet
  $\cH$ is
  \[
    S(\cH\cap\cS_{n-1,1,q},\Xi)=
    \{F(0,\theta(x-\xi^{-1})^{-1})\mid x\in \cF_q\}\cup\{F(0,\theta)\}.
  \]
  This $S(\cH\cap\cS_{n-1,1,q},\Xi)$ is obtained by inversion from the line joining the points 
  $F(0,\theta^{-1})$
  and $F(0,\theta^{-1}\xi^{-1})$.
  By \cite[Theorem 5]{LaZa14}, $\cC_Y$ is a normal rational curve of order 
  $\delta'=\min\{q,[F(\xi^{-1}):F]-1\}=\min\{q,[\Theta:b]-1\}$.
  Now apply lemma \ref{lemma:0}
  for $S_t=\langle\cH\cap\cS_{n-1,1,q}\rangle_q$:
  if $t\ge q$, then $t=q$ and $\delta'=q$ or $\delta'=q-1$, so $[\Theta:b]\ge q$
  and $t=\min\{q,[\Theta:b]\}$.
  If on the contrary $t<q$, then $t-1=\delta'=[\Theta:b]-1$, so $t=[\Theta:b]$
  and $t=\min\{q,[\Theta:b]\}$ again.
\end{proof}

An important consequence of the above result answers the question of the existence of a 
Desarguesian spread containing a given regulus $\cR$ and a subspace disjoint from $\cR$.

\begin{cor}\label{cor1}
If a regulus $\cR=\cS_{n-1,1,q}$ and an $(n-1)$-dimensional subspace $U$, disjoint from $\cR$, in $\PG(2n-1,q)$ are contained in a Desarguesian spread then there is an integer $c$ such that any $n$-subspace $\cH$ containing $U$ intersects $\cR$ in a normal rational curve of order $c$.
\end{cor}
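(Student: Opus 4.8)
The plan is to reduce Corollary \ref{cor1} to Theorem \ref{zero} by translating the hypothesis into the language of $q$-sublines. First I would recall that a Desarguesian spread of $\PG(2n-1,q)$ is projectively equivalent to the field-reduction spread $\cF(\PG(1,q^n))$, and under this identification a regulus $\cR$ contained in the spread corresponds to the image $\tilde{\cF}(b)$ of a $q$-subline $b$ of $\PG(1,q^n)$, while an $(n-1)$-subspace $U$ of the spread disjoint from $\cR$ corresponds to $\cF(\Theta)$ for some point $\Theta\in\PG(1,q^n)$ with $\Theta\notin b$. So I would begin the proof by fixing such an equivalence, so that $\cR=\cS_{n-1,1,q}=\tilde{\cF}(b)$ and $U=\cF(\Theta)$.

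Once the setup is in place, the conclusion is immediate: by Theorem \ref{zero}, every $n$-subspace $\cH$ containing $\cF(\Theta)=U$ meets $\cS_{n-1,1,q}=\cR$ in a normal rational curve of order $\min\{q,[\Theta:b]\}$. This value does not depend on the choice of $\cH$, only on $\Theta$ and $b$, so taking $c=\min\{q,[\Theta:b]\}$ gives the required constant. I would state it this way, emphasising that $c$ is an invariant of the pair $(U,\cR)$ relative to the ambient Desarguesian spread.

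The one point that needs a little care — and which I expect to be the main (minor) obstacle — is justifying that the pair $(\cR,U)$ really does have the described normal form inside a Desarguesian spread. For this I would invoke the standard fact that $\PGL(2n,q)$ acts transitively on the Desarguesian spreads of $\PG(2n-1,q)$ (or rather that all Desarguesian spreads are projectively equivalent), together with the fact that the reguli contained in $\cF(\PG(1,q^n))$ are exactly the images under $\cF$ of the $q$-sublines of $\PG(1,q^n)$, and that the elements of the spread disjoint from such a regulus are precisely the images of the points of $\PG(1,q^n)$ off the corresponding subline. These are routine consequences of the correspondence between field reduction and Desarguesian spreads; the rest is just an application of Theorem \ref{zero}. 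I would therefore keep the proof to two or three sentences, citing the reduction to the field-reduction model and then quoting the theorem.
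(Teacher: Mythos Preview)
Your proposal is correct and matches the paper's approach exactly: the paper gives no separate proof for Corollary~\ref{cor1}, treating it as an immediate consequence of Theorem~\ref{zero} via the field-reduction model, which is precisely the reduction you describe. Your identification of the only nontrivial step---that a regulus inside a Desarguesian spread corresponds to a $q$-subline under field reduction, and a disjoint spread element to a point off that subline---is the right thing to flag, and the justification you outline is standard.
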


The following remark illustrates that this necessary condition is not always satisfied.

\begin{remark}
  For $n>2$ by using the package FinInG \cite{FinInG} of GAP \cite{GAP} examples can be given
  of $(n-1)$-subspaces disjoint from $\cS_{n-1,1,q}$ contained in $n$-subspaces intersecting
  the Segre variety in normal rational curves of distinct orders. We include one explicit example.
  Let $q=4$, $\F_q=\F_2(\omega)$, with $\omega^2+\omega+1=0$. Let
$\R$ be the regulus of $3$-dimensional subspaces of $\PG(7,4)$ obtained from the 
standard subline $\PG(1,q)$ in $\PG(1,q^4)$, and put
$$S_3:=\langle ( 1, 0, 0, 0, \omega^2, 1, 0, 1 ),
  ( 0, 1, 0, 0, 1, \omega^2, 0, \omega^2 ), 
  ( 0, 0, 1, 0, 0, \omega, 1, \omega ), 
  ( 0, 0, 0, 1, \omega^2, \omega^2, \omega, 1 )
  \rangle.
$$
Then $S_3$ is a three-dimensional subspace disjoint from the regulus $\R$. Moreover,
the 4-dimensional subspace $\langle S_3,(1,0,0,0,0,0,0,0)\rangle$ intersects the regulus $\R$
in a normal rational curve of degree 4, while the 4-dimensional
subspace $\langle S_3,(0,1,0,\omega^2,0,0,0,0)\rangle$ intersects $\R$ in a conic.
\end{remark}

\section{Andr\'e-Bruck-Bose representation}\label{sec:4}

The Andr\'e-Bruck-Bose representation of a Desarguesian affine plane of order $q^n$ is related to the image of $\PG(2,q^n)$, under the field reduction map $\cF$, by means of the
following straightforward result.
\begin{prop}\label{bb}
  Let $\cD$ be the Desarguesian spread in $\PG(3n-1,q)$ obtained after applying the field reduction map $\cF$ to the set of points of $\PG(2,q^n)$, $ l_\infty$ a line in
  $\PG(2,q^n)$, and $\cK$ a $(2n)$-subspace of $\PG(3n-1,q)$, containing the spread $\cF( l_\infty)$.
  Take $\PG(2,q^n)\setminus l_\infty$ and $\cK\setminus\langle\cF( l_\infty)\rangle_q$ as representatives of
  $\AG(2,q^n)$ and $\AG(2n,q)$, respectively.
  Then the map $\varphi:\,\AG(2,q^n)\rightarrow\AG(2n,q)$ defined by $\varphi(X)=\cF(X)\cap\cK$ for any
  $X\in\AG(2,q^n)$ is a bijection, mapping lines of $\AG(2,q^n)$ into $n$-subspaces of $\AG(2n,q)$
  whose $(n-1)$-subspaces at infinity belong to the spread $\cF( l_\infty)$.
\end{prop}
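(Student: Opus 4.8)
The plan is to translate everything into vector-space language. Put $L=\F_{q^n}$, $F=\F_q$, identify $\PG(2,q^n)$ with the projective space of the $3$-dimensional $L$-space $L^3$, and accordingly identify $\PG(3n-1,q)=\PG_q(L^3)$, the projective space of $L^3$ regarded as a $3n$-dimensional $F$-space. For a point $X=\langle v\rangle_{q^n}$ of $\PG(2,q^n)$ the spread element is $\cF(X)=\PG_q(Lv)$, where $Lv$ is the ($n$-dimensional over $F$) $L$-span of $v$; the subspaces $\cF(X)$ partition the point set of $\PG_q(L^3)$. Let $W_\infty\subseteq L^3$ be the $2$-dimensional $L$-subspace with $l_\infty=\PG_L(W_\infty)$, so that $\langle\cF(l_\infty)\rangle_q=\PG_q(W_\infty)$ has projective dimension $2n-1$; the hypothesis that $\cK$ contains $\cF(l_\infty)$ then means $\cK=\PG_q(M)$ for an $F$-subspace $M$ of $L^3$ with $W_\infty\subseteq M$ and $\dim_F M=2n+1$, so that $\dim_F(M/W_\infty)=1$.

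First I would check that $\varphi$ is well defined and injective. For $X=\langle v\rangle_{q^n}\notin l_\infty$ we have $v\notin W_\infty$, hence the $L$-subspace $Lv\cap W_\infty$ of the $1$-dimensional $L$-space $Lv$ is $\{0\}$. The Grassmann inequality gives $\dim_F(Lv\cap M)\ge n+(2n+1)-3n=1$, while the $F$-linear map $Lv\cap M\to M/W_\infty$ has kernel $Lv\cap M\cap W_\infty=Lv\cap W_\infty=\{0\}$, so $\dim_F(Lv\cap M)\le 1$; thus $\varphi(X)=\PG_q(Lv\cap M)$ is a single point, and that map, being injective with $1$-dimensional domain, has nonzero image, i.e. $Lv\cap M\not\subseteq W_\infty$, so $\varphi(X)\notin\langle\cF(l_\infty)\rangle_q$, i.e. $\varphi(X)\in\AG(2n,q)$. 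Injectivity is immediate: if $Lv\cap M=Lv'\cap M$, a common nonzero vector is an $L$-scalar multiple of both $v$ and $v'$, forcing $\langle v\rangle_{q^n}=\langle v'\rangle_{q^n}$. Since $|\AG(2,q^n)|=q^{2n}=|\AG(2n,q)|$, $\varphi$ is a bijection; alternatively one verifies surjectivity directly, taking for $P=\langle w\rangle_q\in\cK\setminus\langle\cF(l_\infty)\rangle_q$ (so $w\in M\setminus W_\infty$) the point $X=\langle w\rangle_{q^n}\notin l_\infty$, for which $\varphi(X)\ni\langle w\rangle_q=P$.

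Next I would describe the image of a line $m$ of $\AG(2,q^n)$. Write $m=\bar m\setminus l_\infty$ with $\bar m=\PG_L(W)$ for a $2$-dimensional $L$-subspace $W\neq W_\infty$, so the point at infinity of $m$ is $P_\infty=\PG_L(W\cap W_\infty)$ with $W\cap W_\infty$ of $L$-dimension $1$. Since the $\cF(X)$ are pairwise disjoint, $\bigcup_{X\in m}\cF(X)=\PG_q(W)\setminus\PG_q(W\cap W_\infty)$, whence $\varphi(m)=\bigl(\PG_q(W)\cap\cK\bigr)\setminus\PG_q(W\cap W_\infty)=\PG_q(W\cap M)\setminus\PG_q(W\cap W_\infty)$, using $W\cap W_\infty\subseteq W_\infty\subseteq M$. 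Exactly as before, Grassmann gives $\dim_F(W\cap M)\ge 2n+(2n+1)-3n=n+1$, and the map $W\cap M\to M/W_\infty$, whose kernel is $W\cap W_\infty$ (of $F$-dimension $n$), gives $\dim_F(W\cap M)\le n+1$; hence $\PG_q(W\cap M)$ is an $n$-subspace of $\cK$, and since its subspace $\PG_q(W\cap W_\infty)$ is proper, $\PG_q(W\cap M)$ meets the hyperplane $\langle\cF(l_\infty)\rangle_q=\PG_q(W_\infty)$ precisely in $\PG_q(W\cap M\cap W_\infty)=\PG_q(W\cap W_\infty)=\cF(P_\infty)$, which is a spread element of $\cF(l_\infty)$. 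Therefore $\varphi(m)$ is the affine part of the $n$-subspace $\PG_q(W\cap M)$, with $(n-1)$-subspace at infinity $\cF(P_\infty)\in\cF(l_\infty)$, as claimed.

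The computations are entirely routine; the only step needing a little care is the uniform \emph{upper} bound on $\dim_F(Lv\cap M)$ and $\dim_F(W\cap M)$, since the Grassmann inequality supplies only the lower bounds. The matching upper bounds come from the same device in both cases: project modulo $W_\infty$ and use that $\cK$ lies one projective dimension above $\langle\cF(l_\infty)\rangle_q$, i.e. $\dim_F(M/W_\infty)=1$. Everything else is bookkeeping between projective subspaces and their affine parts.
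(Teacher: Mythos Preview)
Your proof is correct. The paper itself offers no proof of this proposition---it is introduced as ``the following straightforward result'' and left without argument, as it is the standard Andr\'e--Bruck--Bose construction. Your vector-space treatment (Grassmann for the lower bound on $\dim_F(Lv\cap M)$ and $\dim_F(W\cap M)$, projection onto the one-dimensional quotient $M/W_\infty$ for the matching upper bound) supplies exactly the routine details the authors suppress, and is the natural way to verify the statement.
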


The notation in Proposition \ref{bb} is assumed to hold in the whole section.
The following result improves \cite[Theorems 3.3 and 3.5]{BaJa12}, by determining the order of
the involved normal rational curves.
\begin{theorem}\label{thm:q-subline}
  Let $b$ be a $q$-subline of $\PG(2,q^n)$, not contained in $ l_\infty$.
  Set $\Theta=\langle b\rangle_{q^n}\cap l_\infty$.
  Then the Andr\'e-Bruck-Bose representation
  $\varphi(b\setminus l_\infty)$ is the affine part of a normal rational curve whose order
  is $\delta=\min\{q,[\Theta:b]\}$.
  More precisely, if $\delta=1$, then $\varphi(b\setminus l_\infty)$ is an affine line;
  if $\delta>1$, then $b\cap l_\infty=\emptyset$, and $\varphi(b)$ is a normal rational curve
  with no points at infinity.
\end{theorem}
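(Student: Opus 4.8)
The plan is to pass from the plane to the line $m:=\langle b\rangle_{q^n}$ and then invoke the results of Section \ref{sec:3}, namely Theorem \ref{zero} (and Proposition \ref{zero-} in the degenerate case). Since $b$ is not contained in $l_\infty$, the line $m$ is a line of $\PG(2,q^n)$ distinct from $l_\infty$, hence $m\cap l_\infty=\{\Theta\}$ and $b\cap l_\infty$ is either empty or equal to $\{\Theta\}$. Because $[\Theta:b]=1$ exactly when $\Theta\in b$, the alternative ``$\delta=1$ versus $\delta>1$'' coincides with ``$\Theta\in b$ versus $\Theta\notin b$'' (here one uses $q\ge 2$ so that $\min\{q,[\Theta:b]\}>1$ whenever $[\Theta:b]\ge 2$); thus the last two sentences of the statement will follow as soon as the main claim is proved, and in particular $b\cap l_\infty=\emptyset$ whenever $\delta>1$.

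Next I would set $W:=\langle\cF(m)\rangle_q$, a $(2n-1)$-subspace of $\PG(3n-1,q)$. Since field reduction commutes with taking spans and intersections of subspaces, $\tilde\cF(b)$ is a Segre variety $\cS_{n-1,1,q}$ contained in $W$ whose first family of generators consists of the spread elements $\cF(P)$, $P\in b$ --- this is the identification already underlying Theorem \ref{zero} --- and moreover $W\cap\langle\cF(l_\infty)\rangle_q=\cF(m\cap l_\infty)=\cF(\Theta)$. The heart of the argument is then to exhibit the right $n$-subspace through $\cF(\Theta)$ to which Section \ref{sec:3} applies, namely $\cH:=\cK\cap W$. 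First, $\cH\supseteq\cF(\Theta)$ because $\cF(\Theta)\subseteq W$ and $\cF(\Theta)\subseteq\langle\cF(l_\infty)\rangle_q\subseteq\cK$. Second, $\cH$ is $n$-dimensional: $\langle\cF(l_\infty)\rangle_q$ is a hyperplane of $\cK$, and $\cH$ is not contained in it since for any affine $P\in b$ the point $\varphi(P)=\cF(P)\cap\cK$ lies in $\cH$ (as $\cF(P)\subseteq W$) and is affine; hence $\cH\cap\langle\cF(l_\infty)\rangle_q$ is a hyperplane of $\cH$ equal to $W\cap\langle\cF(l_\infty)\rangle_q=\cF(\Theta)$, forcing $\dim\cH=\dim\cF(\Theta)+1=n$. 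Now Theorem \ref{zero}, in the case $\Theta\notin b$, gives that $\cH\cap\cS_{n-1,1,q}$ is a normal rational curve of order $\delta=\min\{q,[\Theta:b]\}>1$; while Proposition \ref{zero-}, in the case $\Theta\in b$ (where $\delta=1$), gives $\cH\cap\cS_{n-1,1,q}=\cF(\Theta)\cup l^{II}$ with $l^{II}$ a line of the second family of $\cS_{n-1,1,q}$.

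It remains to read off $\varphi(b\setminus l_\infty)$. Since $\tilde\cF(b)\subseteq W$ we have $\cH\cap\cS_{n-1,1,q}=\cK\cap\tilde\cF(b)=\bigcup_{P\in b}(\cF(P)\cap\cK)$. For affine $P$, $\cF(P)$ is disjoint from $\langle\cF(l_\infty)\rangle_q$, which is partitioned by the spread elements $\cF(Q)$, $Q\in l_\infty$, so $\cF(P)\cap\cK=\{\varphi(P)\}$ is a single affine point; and $\cF(\Theta)\subseteq\langle\cF(l_\infty)\rangle_q\subseteq\cK$. Hence when $\Theta\notin b$ we get $\cK\cap\tilde\cF(b)=\varphi(b)=\varphi(b\setminus l_\infty)$, which is therefore exactly the normal rational curve of order $\delta$ and has no points at infinity; and when $\Theta\in b$ we get $\cK\cap\tilde\cF(b)=\cF(\Theta)\cup\varphi(b\setminus\{\Theta\})$, so comparing with $\cF(\Theta)\cup l^{II}$ and discarding the points lying in $\langle\cF(l_\infty)\rangle_q$ (all of $\cF(\Theta)$, together with the single point $l^{II}\cap\cF(\Theta)$) shows that the $q$ distinct points $\varphi(P)$, $P\in b\setminus\{\Theta\}$, are precisely the $q$ affine points of the line $l^{II}$. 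Thus $\varphi(b\setminus l_\infty)$ is the affine part of $l^{II}$, an affine line, completing the proof.

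I expect the main obstacle to be the geometric bookkeeping of the middle step --- verifying that $\cH=\cK\cap W$ is genuinely an $n$-dimensional subspace containing $\cF(\Theta)$ (so that Theorem \ref{zero} and Proposition \ref{zero-} apply) and correctly identifying which points of $\cH\cap\cS_{n-1,1,q}$ are at infinity --- rather than any genuinely new idea: all the quantitative content about the order of the curve is already packaged in Theorem \ref{zero}.
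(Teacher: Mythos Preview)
Your proof is correct and follows essentially the same approach as the paper: both define $\cH$ as the intersection of $\cK$ with the $(2n-1)$-span $W=\langle\cF(b)\rangle_q=\langle\cF(m)\rangle_q$, check that $\cH$ is an $n$-subspace through $\cF(\Theta)$, and then invoke Proposition~\ref{zero-} and Theorem~\ref{zero}. The paper's proof is compressed into three sentences, while you spell out the dimension count for $\cH$ and the identification $\cH\cap\cS_{n-1,1,q}=\varphi(b)\cup(\cF(\Theta)\text{ if }\Theta\in b)$; this extra detail is sound and does not deviate from the intended argument.
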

\begin{proof}
  The intersection $\cH=\langle\cF(b)\rangle_q\cap\cK$ is an $n$-space containing $\cF(\Theta)$,
  and contained in the span of the Segre variety $\segre{n-1}1q=\tilde\cF(b)$.
  The result follows from Proposition \ref{zero-} and Theorem \ref{zero}.
\end{proof}
The results in \cite[Theorems 3.3 and 3.5]{BaJa12} also characterize the normal rational
curves arising from $q$-sublines in $\AG(2,q^n)$.

In \cite{BFG,QuCa,Vi} for $n=2$ 
and \cite[Theorem 3.6 (a)(b)]{BaJa12} for any $n$ the Andr\'e-Bruck-Bose  representation
of a $q$-subplane tangent to a line at the infinity is described.
Further properties are stated in the following theorem:
\begin{theorem}
  Let $B$ be a $q$-subplane of $\PG(2,q^n)$ that is tangent to $l_\infty$ at the point $T$.
  Let $b$ be a line of $B$ not through $T$, $\Theta=\langle b\rangle_{q^n}\cap l_\infty$, 
  and $\delta=\min\{q,[\Theta:b]\}$.
  Then there are a normal rational curve $\cC_0$ of order $\delta$ in the $n$-subspace 
  $\varphi(\langle b\rangle_{q^n})$,
  a normal rational curve $\cC_1\subset\cF(T)$ of order $\delta'$, with
  \begin{equation}\label{eq:delta1}
    \delta'\ \left\{\begin{array}{ll}=[\Theta:b]-1&\mbox{ for }q>[\Theta:b]\\
    \in\{q-1,q\}&\mbox{otherwise,}\end{array}\right.
  \end{equation}
  and a projectivity $\kappa:\,\cC_0\rightarrow\cC_1$ (in the sense of Remark \ref{proj-is-proj}), such that
  $\varphi(B\setminus l_\infty)$ is the ruled surface union of all lines $XX^\kappa$ for $X\in\cC_0$.
\end{theorem}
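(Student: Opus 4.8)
Write $L=\F_{q^n}$, $F=\Fq$. The plan is to reduce the statement to two facts about a ruled surface and then to verify these in a convenient coordinate model. Since $B\cap l_\infty=\{T\}$ and $T\notin b$, one has $b\cap l_\infty\subseteq B\cap l_\infty=\{T\}$, so $b\cap l_\infty=\emptyset$ and $\Theta\notin b$; hence $[\Theta:b]\ge2$ and $\delta\ge2$. By Theorem~\ref{thm:q-subline}, $\cC_0:=\varphi(b)$ is a normal rational curve of order $\delta$, lying in the $n$-subspace $\varphi(\langle b\rangle_{q^n})$ and with no point at infinity. Let $b_0,\dots,b_q$ be the $q+1$ lines of $B$ through $T$; each meets $b$ in one point $X_i$, and $b=\{X_0,\dots,X_q\}$. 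Since $\langle b_i\rangle_{q^n}$ meets $l_\infty$ only in $T$, the completion in $\cK$ of $\varphi(\langle b_i\rangle_{q^n})$ is, by Proposition~\ref{bb}, an $n$-subspace whose $(n-1)$-subspace at infinity is $\cF(T)$ (the spread element of the direction $T$); and since $[\,T:b_i]=1$, Theorem~\ref{thm:q-subline} gives that $\varphi(b_i\setminus l_\infty)$ is an affine line. Let $m_i$ be its completion and $P_i:=m_i\cap\cF(T)$ its point at infinity; note $\varphi(X_i)\in m_i\cap\cC_0$ because $X_i\in b_i$. Then
\[
\varphi(B\setminus l_\infty)=\bigcup_{i=0}^{q}\varphi(b_i\setminus l_\infty)=\bigcup_{i=0}^{q}\bigl(m_i\setminus\{P_i\}\bigr).
\]
Once the $P_i$ are known to be pairwise distinct, $\cC_1:=\{P_0,\dots,P_q\}\subset\cF(T)$ has $q+1$ points, $\kappa\colon\cC_0\to\cC_1$, $\varphi(X_i)\mapsto P_i$, is a well-defined bijection, $m_i$ is the join of $\varphi(X_i)$ with its image, and the display reads $\varphi(B\setminus l_\infty)=\bigl(\bigcup_{X\in\cC_0}XX^\kappa\bigr)\cap\AG(2n,q)$. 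So it remains to prove that $\cC_1$ is a normal rational curve of order $\delta'$ as in \eqref{eq:delta1} (whence in particular $|\cC_1|=q+1$) and that $\kappa$ is a projectivity in the sense of Remark~\ref{proj-is-proj}.

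For this I would pass to a normal form. The stabiliser of $T$ in the collineation group of $B\cong\PG(2,q)$ is transitive on the lines of $B$ not through $T$, and $\PGL(3,q)\le\PGL(3,q^n)$, so after a collineation of $\PG(2,q^n)$ we may assume $B=\{\langle v\rangle_{q^n}:v\in F^3\setminus\{0\}\}$, $T=\langle(1,0,0)\rangle_{q^n}$ and $b=B\cap\{x_0=0\}$. Then $\langle b\rangle_{q^n}=\{x_0=0\}$; writing $\Theta=\langle(0,1,\zeta)\rangle_{q^n}$ we have $\zeta\in L\setminus F$ and $F(\zeta)=\F_{q^{[\Theta:b]}}$ by Proposition~\ref{prop:grado_pto}; and $l_\infty=\langle T,\Theta\rangle_{q^n}=\{x_2=\zeta x_1\}$, which meets $B$ only in $T$. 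Realising the field reduction as $\PG(3n-1,q)=\PG_q(L^3)$ and taking $\cK=\PG_q(\{(x_0,x_1,\zeta x_1+f):x_0,x_1\in L,\ f\in F\})$, one gets $\varphi(\langle(x_0,x_1,x_2)\rangle_{q^n})=\langle(x_0,x_1,x_2)\rangle_q$ after rescaling so that $x_2-\zeta x_1=1$. Parametrising $b$ by $s\in F\cup\{\infty\}$ with $X_s=\langle(0,1,s)\rangle_{q^n}$, $X_\infty=\langle(0,0,1)\rangle_{q^n}$, a direct computation yields $\varphi(X_s)=\langle(0,(s-\zeta)^{-1},s(s-\zeta)^{-1})\rangle_q$ and $\varphi(b_s\setminus l_\infty)=\{\langle(s-\zeta)^{-1}(\alpha,1,s)\rangle_q:\alpha\in F\}$, the affine line through $\varphi(X_s)$ with point at infinity $P_s=\langle(s-\zeta)^{-1}(1,0,0)\rangle_q\in\cF(T)$ (and $P_\infty=\langle(1,0,0)\rangle_q$). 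Identifying $\cF(T)$ with $\PG_q(L)$ via $\langle(\lambda,0,0)\rangle_q\leftrightarrow\langle\lambda\rangle_q$, one sees that $\cC_1=\{\langle(s-\zeta)^{-1}\rangle_q:s\in F\}\cup\{\langle1\rangle_q\}$ is the image, under the inversion $\langle w\rangle_q\mapsto\langle w^{-1}\rangle_q$ on $\PG_q(L)$, of the $q$-subline $\{\langle s-\zeta\rangle_q:s\in F\}\cup\{\langle1\rangle_q\}$ of $\cF(T)$, which has degree $[\Theta:b]$ because $F(\zeta)=\F_{q^{[\Theta:b]}}$. By \cite[Theorem~5]{LaZa14}, $\cC_1$ is then a normal rational curve of order $\min\{q,[\Theta:b]-1\}$, which is exactly the value $\delta'$ of \eqref{eq:delta1}; indeed \eqref{eq:delta1} is also the shape of Lemma~\ref{lemma:0}(ii) for a curve $\Phi$ of order $\delta$. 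In particular the $P_s$ are pairwise distinct.

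Finally, in this model $\cC_0$ and $\cC_1$ carry the rational parametrisations $s\mapsto\varphi(X_s)$ and $s\mapsto P_s$, and $\kappa$ is the identity on the parameter $s$; equivalently, the $q+1$ mutually disjoint lines $m_s$ joining each point $\varphi(X_s)$ of $\cC_0$ to its image $P_s$ are the rulings of the ruled surface $\bigcup_s m_s$ with directrices $\cC_0,\cC_1$, and $\kappa$ is read off from this pencil of transversal lines in exactly the way a projectivity is defined from a ruling of a Segre variety in Remark~\ref{proj-is-proj}, hence is a projectivity of normal rational curves. I expect the delicate point to be precisely this last one: making ``projectivity in the sense of Remark~\ref{proj-is-proj}'' rigorous when $\cC_0$ and $\cC_1$ have different orders, so that $\bigcup_s m_s$ is a genuine scroll rather than part of a single Segre variety --- either by checking directly that $\kappa$ matches the intrinsic parametrisations of the two curves, or by exhibiting an ambient Segre-type configuration linking them --- together with the accompanying case distinction for $\delta'$ in the range $[\Theta:b]\ge q$, where \eqref{eq:delta1} only asserts $\delta'\in\{q-1,q\}$ and one must appeal to Lemma~\ref{lemma:0}(ii).
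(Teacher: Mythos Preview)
Your argument is correct and complete enough to count as a proof, but it takes a genuinely different route from the paper's. You fix coordinates, compute the points $P_s=\langle(s-\zeta)^{-1}\rangle_q$ explicitly, recognise $\cC_1$ as the inversion of an $\Fq$-subline inside $\cF(T)\cong\PG_q(L)$, and invoke \cite[Theorem~5]{LaZa14} directly. This is exactly the mechanism the paper uses in the proof of Theorem~\ref{zero}, so you are in effect re-running that computation one dimension up rather than quoting its output. As a bonus, your route pins down $\delta'=\min\{q,[\Theta:b]-1\}$ precisely, which is sharper than the disjunction in~\eqref{eq:delta1} when $q\le[\Theta:b]$.

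The paper instead works structurally inside the Segre variety $\cS_{n-1,2,q}=\tilde\cF(B)\subset\PG(3n-1,q)$. Using the embedding $g\colon\PG(n-1,q)\times B\to\cS_{n-1,2,q}$ it defines, for each $U\in B$, the map $\kappa_U\colon(X,Y)^g\mapsto(X,U)^g$. For $U\in b$ this gives $\cC_0^{\kappa_U}=S(\cC_0,\cF(U))$ inside the sub-Segre $\tilde\cF(b)=\cS_{n-1,1,q}$, whose order is read off from Lemma~\ref{lemma:0}\,(ii); one then checks $\kappa=\kappa_T$ on $\cC_0$ and factors $\kappa=\kappa_U\circ(\kappa_T|_{\cF(U)})$. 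The payoff of this approach is precisely the point you flagged as delicate: since $\kappa_U$ is \emph{literally} the map of Remark~\ref{proj-is-proj} (points on a common transversal of $\cS_{n-1,1,q}$) and $\kappa_T|_{\cF(U)}\colon\cF(U)\to\cF(T)$ is an honest projectivity of $(n-1)$-spaces, the composite is a projectivity in the required sense without any further discussion of parametrisations. Your verification that ``$\kappa$ is the identity on the parameter~$s$'' amounts to the same thing, but to make it rigorous you would still need to say why the rational parametrisations $s\mapsto\varphi(X_s)$ and $s\mapsto P_s$ are the canonical ones of Remark~\ref{proj-is-proj}; the Segre-variety factorisation sidesteps that check entirely.
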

\begin{proof}
  By Theorem \ref{thm:q-subline}, $\cC_0:=\varphi(b)$ is a normal rational curve of order $\delta$ in the $n$-subspace 
  $\varphi(\langle b\rangle_{q^n}\setminus l_\infty)$, 
  and for any $P=\varphi(X)\in\cC_0$, the 
  subline $TX$ of $B$ corresponds to an affine line $PP^\kappa$ with
 $P^\kappa\in\cF(T)$ 
  at infinity. 
  Define $\cC_1=\{P^\kappa\mid P\in\cC_0\}$.
  
  By the field reduction map $\cF=\cF_{3,n,q}$, the subplane $B$ is mapped to $\cF(B)$ which is the set of all maximal subspaces
  of the first family in $\cS_{n-1,2,q}\subset\PG(3n-1,q)$.
  The vector homomorphism
  \[
    (\lambda,v)\in \F_{q^n}\times \F_q^3\mapsto \lambda\otimes_{\Fq}v
  \]
  corresponds to a projective embedding $g:\,\PG(n-1,q)\times B \rightarrow\cS_{n-1,2,q}$ whose image is 
  $\cS_{n-1,2,q}$, and such that $\cF(X)=(\PG(n-1,q)\times X)^g$ for any point $X$ in $B$.
  It holds $\varphi(B\setminus l_\infty)=\cS_{n-1,2,q}\cap\cK\setminus\cF(T)$.
  For any point $U$ in $B$ define
  \[
    \kappa_U:\,(X,Y)^g\in\cS_{n-1,2,q}\mapsto(X,U)^g\in\cF(U).
  \]  
  Note that for any  $Y\in B$, the restriction of $\kappa_U$ to $\cF(Y)$ is a projectivity.
  For any $U\in b $, using the notation from Lemma \ref{lemma:0} it 
  holds $\cC_0^{\kappa_U}=S(\cC_0,\cF(U))$, and as a consequence, 
  $\cC_0^{\kappa_U}$ is a normal rational curve of order $\delta'$
  as in (\ref{eq:delta1}).
 Now, since for any $P\in\cC_0$, say $P=(X_P,Y_P)^g$, the points $P$, $P^\kappa$ and $P^{\kappa_T}$ are on the plane 
  $(X_P\times B)^g\in\segre{n-1}2q^{II}$, and $P^\kappa, P^{\kappa_T}\in \cF(T)$, it follows that
  $P^\kappa=P^{\kappa_T}$. 
  It also follows that $\cC_1=\cC_0^{{\kappa_{U}}{\kappa_{T}}}=S(\cC_0,\cF(U))^{\kappa_{T}}$, and hence $\cC_1$ is a normal rational curve of order $\delta'$ as in (\ref{eq:delta1}).
 Finally, $\kappa_{U}:\,\cC_0\rightarrow S(\cC_0,\cF(U))$ is a projectivity
  as defined in Remark \ref{proj-is-proj}, and hence so is $\kappa$.
\end{proof}

\section{On the classification of clubs}\label{sec:clubs}

An \textit{$\Fq$-club} (or simply a club) in $\PG(1,q^n)$ is an $\Fq$-linear set of rank three,
having a point of weight two, called the \textit{head} of the club.
An $\Fq$-club has $q^2+1$ points, and the non-head points have weight one.
From now on it will be assumed that $n>2$.
The next proposition is a straightforward consequence of the representation of linear sets as
projections of subgeometries \cite[Theorem 2]{LuPo04}.
\begin{prop}\label{prop:LPclub}
Let $L$ be an $\Fq$-club in $\PG(1,q^n)\subset\PG(2,q^n)$.
Then there are a $q$-subplane $\Sigma$ of $\PG(2,q^n)$, a $q$-subline $b$ in $\Sigma$, and a point
$\Theta\in\langle b\rangle_{q^n}\setminus b$, such that $L$ is the projection of $\Sigma$ from
the center $\Theta$ onto the axis $\PG(1,q^n)$.
\end{prop}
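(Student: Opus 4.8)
The plan is to invoke directly the Lunardon--Polverino representation of $\Fq$-linear sets as projections of canonical subgeometries \cite[Theorem 2]{LuPo04}, and then to read off the extra information about the center from the hypothesis that $L$ is a club. Since $L$ has rank three and lives in $\PG(1,q^n)$, the ambient space for the representation is $\PG(2,q^n)$, in which $\PG(1,q^n)$ is already embedded. Applying \cite[Theorem 2]{LuPo04} yields a canonical subgeometry $\Sigma\cong\PG(2,q)$ of $\PG(2,q^n)$ --- that is, a $q$-subplane --- together with a center $\Theta$ of projective dimension $(3-1)-(1+1)+ \,0 = 0$, i.e.\ a point, with $\Theta\notin\Sigma$ and $\Theta\notin\PG(1,q^n)$ (a proper projection forces the center to avoid both the subgeometry and the axis), such that $L$ is the projection of $\Sigma$ from $\Theta$ onto $\PG(1,q^n)$.

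Next I would recall the weight formula attached to such a projection: for every point $P$ of $\PG(1,q^n)$ --- note $P\neq\Theta$, since $\Theta$ lies off the axis, so the line $\langle\Theta,P\rangle_{q^n}$ is defined --- one has
\[
  w_L(P)=\dim\bigl(\langle\Theta,P\rangle_{q^n}\cap\Sigma\bigr)+1,
\]
where $\langle\Theta,P\rangle_{q^n}$ is the line of $\PG(2,q^n)$ through $\Theta$ and $P$ and the intersection is regarded as a subspace of the subgeometry $\Sigma$ (the empty set having dimension $-1$). A line of $\PG(2,q^n)$ meets a $q$-subplane in the empty set, a single point, or a $q$-subline, so $w_L(P)\in\{0,1,2\}$ for all $P$, in agreement with $L$ being a club.

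Then I would use the defining property of a club: $L$ has a point $P_0$ of weight two, its head. By the weight formula, $\langle\Theta,P_0\rangle_{q^n}\cap\Sigma$ has dimension $1$ as a subspace of $\Sigma$, hence it is a $q$-subline $b$ of $\Sigma$. Two distinct points of $b$ already span the line $\langle\Theta,P_0\rangle_{q^n}$ over $\F_{q^n}$, so $\langle b\rangle_{q^n}=\langle\Theta,P_0\rangle_{q^n}$ and therefore $\Theta\in\langle b\rangle_{q^n}$; since $\Theta\notin\Sigma\supseteq b$ we get $\Theta\notin b$. Thus $\Sigma$, $b$, and $\Theta\in\langle b\rangle_{q^n}\setminus b$ are exactly as required, and $L$ is the projection of $\Sigma$ from $\Theta$ onto $\PG(1,q^n)$.

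The only delicate point --- and it is a mild one, which is why the statement is called straightforward --- is the bookkeeping around the center: checking that \cite[Theorem 2]{LuPo04} indeed produces a projection of a $\PG(2,q)$ from a \emph{point} disjoint from both $\Sigma$ and the axis (rather than, say, a degenerate projection), and that the weight formula in the second step is precisely the one established in \cite{LuPo04}. Once these are in hand, the identification of $b$ as the fibre over the head is immediate.
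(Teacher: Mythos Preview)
Your proposal is correct and follows exactly the approach the paper indicates: the paper does not give a proof at all, merely stating that the proposition is a straightforward consequence of \cite[Theorem 2]{LuPo04}. Your write-up supplies the details the paper omits, in particular the identification of $b$ as the fibre over the head via the weight formula, which is precisely the intended reading.
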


As before the notation $\cF$ and $\tilde{\cF}$ is used, where $\cF=\cF_{2,n,q}$ denotes the field reduction map from $\PG(1,q^n)$ to $\PG(2n-1,q)$.
\begin{prop}\label{prop:maximal_subspaces}
  Let $L$ be an $\Fq$-club of $\PG(1,q^n)$ with head $\Upsilon$.
  Then $\tilde\cF(L)$ contains two collections of subspaces, say $F_1$ and $F_2$, satisfying the
  following properties.
  \begin{itemize}
  \item[(i)] The subspaces in $F_1$ are $(n-1)$-dimensional, are pairwise disjoint, 
  and any subspace in $F_1$ is disjoint from $\cF(\Upsilon)$.
  \item[(ii)] Any subspace in $F_2$ is a plane and intersects $\cF(\Upsilon)$ in precisely a line.
  \item[(iii)] Any point of $\cF(\Upsilon)$ belongs to exactly $q+1$ planes in $F_2$.
  \item[(iv)] If $L$ is not isomorphic to $\PG(1,q^2)$,
  and $l$ is any line of $\PG(2n-1,q)$ contained in $\tilde\cF(L)$, then $l$ is contained in $\cF(\Upsilon)$
  or in a subspace in $F_1\cup F_2$.
  \end{itemize}
\end{prop}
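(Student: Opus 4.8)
The plan is to pass to coordinates, exhibit the two families explicitly, verify (i)--(iii) by direct computation, and reduce (iv) to a statement about the field‑inversion map on $\PG(n-1,q)$. Write $\PG(2n-1,q)=\PG_q(\F_{q^n}^2)$ and let $\cF=\cF_{2,n,q}$ be field reduction to $\F_q$‑subspaces. After an element of $\PGL(2,q^n)$ (which induces a spread‑preserving collineation) we may assume $\Upsilon=\langle(1,0)\rangle_{q^n}$, so $\cF(\Upsilon)=R_0:=\PG_q(\F_{q^n}\times\{0\})$. If $W$ is the rank‑$3$ $\F_q$‑space defining $L$, the weight‑two condition says $W\cap(\F_{q^n}\times\{0\})$ is $2$‑dimensional, and a further translation and $\F_{q^n}^*$‑scaling bring $W$ to the form $U_0\times\F_q$, with $U_0$ a $2$‑dimensional $\F_q$‑subspace of $\F_{q^n}$ that we may take to contain $1$. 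Then $L=\{\Upsilon\}\cup\{\langle(u,1)\rangle_{q^n}:u\in U_0\}$, $\tilde\cF(L)=\PG_q(T)$ with $T=(\F_{q^n}\times\{0\})\cup\bigcup_{u\in U_0}\F_{q^n}(u,1)$, and one checks that $L\cong\PG(1,q^2)$ precisely when $U_0$ is closed under multiplication by $\F_{q^2}$ (i.e.\ $U_0$ is a $1$‑dimensional $\F_{q^2}$‑space).

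\emph{The two families and (i)--(iii).} Take $F_1:=\{\cF(\langle(u,1)\rangle_{q^n}):u\in U_0\}$, the $q^2$ spread elements through the non‑head points of $L$; as distinct members of the Desarguesian spread they are $(n-1)$‑dimensional, pairwise disjoint, and disjoint from $R_0$, which is (i). Take $F_2:=\{\tau_c:c\in\F_{q^n}^*\}$ with $\tau_c:=\PG_q(cU_0\times\langle c\rangle_q)$. Each $\tau_c$ is a plane; since $(sc)U_0=cU_0$ for $s\in\F_q^*$ one gets $\tau_c\subseteq\tilde\cF(L)$ and $\tau_c\cap R_0=\PG_q(cU_0\times\{0\})$, a line, giving (ii). For (iii): a point $\langle(a,0)\rangle_q\in R_0$ lies on $\tau_c$ iff $a\in cU_0$, i.e.\ iff $\langle c\rangle_q$ belongs to $\{\langle a/u\rangle_q:u\in U_0\setminus\{0\}\}$, a set of exactly $(q^2-1)/(q-1)=q+1$ points; since $\tau_c=\tau_{c'}$ iff $\langle c\rangle_q=\langle c'\rangle_q$, this point of $R_0$ lies on exactly $q+1$ planes of $F_2$.

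\emph{(iv): reduction.} Let $l=\PG_q(Z)\subseteq\tilde\cF(L)$ be a line, $Z$ a $2$‑dimensional $\F_q$‑subspace of $T$. If $l$ lies in a single spread element we are done, that element being $R_0$ or a member of $F_1$. Otherwise, as for $n>2$ a $2$‑dimensional $\F_q$‑space meets every $1$‑dimensional $\F_{q^n}$‑space in dimension at most one, $l$ meets $q+1$ spread elements in one point each; equivalently the image $s$ of $l$ under the spread projection $\PG_q(\F_{q^n}^2)\to\PG(1,q^n)$ is a $q$‑subline of $L$. If $\Upsilon\in s$: writing $Z\cap(\F_{q^n}\times\{0\})=\langle(a,0)\rangle_q$ and picking $(b,c)\in Z$ with $c\neq0$, the inclusion $Z\subseteq T$ forces $a,b\in cU_0$, hence $Z\subseteq cU_0\times\langle c\rangle_q$ and $l\subseteq\tau_c\in F_2$. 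If $\Upsilon\notin s$: then $Z=\{(\phi(y),y):y\in Y_0\}$ for a $2$‑dimensional $\F_q$‑space $Y_0\subseteq\F_{q^n}$ and an $\F_q$‑linear $\phi$ with $\phi(y)/y\in U_0$ for all $y\in Y_0\setminus\{0\}$, and $\phi$ is not scalar multiplication (else $l$ would lie in a spread element of $F_1$). Fixing a basis $e_1,e_2$ of $Y_0$, setting $\eta:=e_1/e_2\notin\F_q$, and subtracting the constant $\phi(e_2)/e_2\in U_0$, the requirement becomes: the $q$ elements $(\eta+a)^{-1}$ ($a\in\F_q$) all lie in one $2$‑dimensional $\F_q$‑subspace of $\F_{q^n}$; equivalently, the inversion $\iota\colon\langle x\rangle_q\mapsto\langle x^{-1}\rangle_q$ of $\PG_q(\F_{q^n})$ maps the $q$ points $\langle\eta+a\rangle_q$ of the line $\PG_q(\langle1,\eta\rangle_q)$ into a single line.

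\emph{The crux and conclusion.} One proves: if $\iota$ sends three distinct collinear points $\langle v_1\rangle_q,\langle v_2\rangle_q,\langle v_3\rangle_q$ into a line, then $v_1/v_2\in\F_{q^2}$ — from $v_3=\gamma v_1+\delta v_2$ and $v_3^{-1}=\alpha v_1^{-1}+\beta v_2^{-1}$ ($\alpha,\beta,\gamma,\delta\in\F_q$) one clears denominators to get a non‑trivial $\F_q$‑relation among $v_1^2,v_1v_2,v_2^2$ (non‑triviality uses that the three points are distinct), so $v_1/v_2$ is a root of a quadratic over $\F_q$. For $q\geq3$ the $q$ points above include three of them, so $\eta\in\F_{q^2}$; then every $(\eta+a)^{-1}\in\F_{q^2}$, the line they span is $\PG_q(\F_{q^2})$, and tracing the normalisation backwards shows $U_0$ is an $\F_q$‑multiple of $\F_{q^2}$, i.e.\ $L\cong\PG(1,q^2)$, contrary to hypothesis. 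Hence the case $\Upsilon\notin s$ cannot occur, and (iv) follows. The step I expect to be the main obstacle is exactly this inversion lemma together with the clean bookkeeping of the reduction to it; note also that the argument genuinely uses $q\geq3$, so the behaviour for $q=2$ (where any three points of $\PG(1,q^n)$ form a subline) needs separate attention.
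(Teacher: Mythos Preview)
Your argument is correct and takes a genuinely different route from the paper's. The paper lifts everything to $\PG(3n-1,q)$: realising $L$ as the projection of a $q$-subplane $\Sigma$ from a point $\Theta$ (Proposition~\ref{prop:LPclub}), one has $\tilde\cF(\Sigma)=\cS_{n-1,2,q}$, and $F_1$, $F_2$ are defined as the images, under the projection $p_2$ from $\cF(\Theta)$, of the two families of maximal subspaces of this Segre variety. Properties (i)--(iii) then follow from the Segre structure, and for (iv) the paper shows that a bad line $l$ would make $\cB(l)$ an \emph{irregular} subline of $L$, invoking \cite[Corollary~13]{LaVV10} to rule this out. You instead stay inside $\PG_q(\F_{q^n}^2)$, normalise the club to $U_0\times\F_q$, and write $F_2=\{\tau_c\}$ explicitly; one checks that these are exactly the projected $V^{II}$'s, so the two definitions of $F_2$ agree. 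The substantive difference is in (iv): you replace the external citation by a self-contained inversion lemma, making transparent where the hypothesis $L\not\cong\PG(1,q^2)$ enters. The paper's Segre picture, in turn, is what drives the next result (Proposition~\ref{prop:struttura}).

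Your concern about $q=2$ is justified; in fact (iv) \emph{fails} for $q=2$ whenever $L\not\cong\PG(1,4)$. With $U_0=\{0,1,u,u{+}1\}$ and $u\notin\F_4$, the line on $Z=\langle(u,1),(0,u{+}1)\rangle_q$ lies in $\tilde\cF(L)$, meets the three non-head spread elements through $\langle(u,1)\rangle$, $\langle(0,1)\rangle$, $\langle(1,1)\rangle$, and is therefore disjoint from $\cF(\Upsilon)$; hence it lies neither in $\cF(\Upsilon)$, nor in a member of $F_1$, nor in any plane meeting $\cF(\Upsilon)$ in a line. So no choice of $F_2$ satisfying (ii) can rescue (iv) here, and the paper's appeal to \cite{LaVV10} cannot cover this case either. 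This is harmless for the sequel, since Theorem~\ref{thm:num_orb} is vacuous for $q=2$.
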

\begin{proof}
The assumptions imply the existence of $\Sigma$ and a $q$-subline $b$ in $\Sigma$ as in Proposition \ref{prop:LPclub}.
  The assertions are a consequence of the fact that $\tilde\cF(\Sigma)$ is a Segre variety $\cS_{n-1,2,q}$ in $\PG(3n-1,q)$.
  Let
  \[
    p_1:\,\PG(2,q^n)\setminus\Theta\rightarrow\PG(1,q^n)
  \]
  be the projection with center $\Theta$, associated with
  \[
    p_2:\PG(3n-1,q)\setminus\cF(\Theta)\rightarrow\PG(2n-1,q).
  \]
  The collections $F_1$ and $F_2$ are defined as follows:
  \[
    F_1=\{\cF(p_1(X))\mid X\in\Sigma\setminus b\}=\cF(L)\setminus\cF(\Upsilon),\quad
    F_2=\{p_2(V^{II})\mid V^{II}\in\tilde\cF(\Sigma)^{II}\}.
  \]  
  The assertion \textit{(i)} is straightforward, as well as $\dim(V)=2$ for any $V\in F_2$.
  For any $V^{II}\in\tilde\cF(\Sigma)^{II}$, the intersection
  $V^{II}\cap\langle\tilde\cF(b)\rangle_q$ is a line, and this with
  $p_2^{-1}(\cF(\Upsilon))=\langle\tilde\cF(b)\rangle_q\setminus\cF(\Theta)$ implies 
  the second assertion in \textit{(ii)}.
  Next, let $P$ be a point in $\cF(\Upsilon)$.
  A plane $V=  p_2(V^{II})$ contains $P$ if, and only if, $V^{II}$ intersects 
  the $n$-subspace $\langle\cF(\Theta),P\rangle_q$,
  that is, $V^{II}$ intersects the normal rational curve
  $\cS_{n-1,2,q}\cap\langle\cF(\Theta),P\rangle_q$; this implies \textit{(iii)}.
  
  Assume that a line $l\subset\tilde\cF(L)$ exists which is neither contained
  in $\cF(\Upsilon)$, nor in a $T\in F_1\cup F_2$. 
  Let $Q$ be a point in $l\setminus\cF(\Upsilon)$, and let $V\in F_2$ such that $Q\in V$.
  It holds $L=\cB(V)$.
  Then $\cB(l)$ is a $q$-subline of $L$.
  Suppose that a line $l'$ in $V$ exists such that $\cB(l')=\cB(l)$.
  Since $\cB(Q)\neq\cB(Q')$ for any $Q'\in V$, $Q'\neq Q$, the line $l'$ contains $Q$.
  Then $l$, $l'$ are two distinct transversal lines in $\cB(l)^{II}$, a contradiction.
  Hence $\cB(l')\neq \cB(l)$ for any line $l'$ in $V$,  
  that is, $\cB(l)$ is a so-called \textit{irregular subline} \cite{LaVV10}.
  By \cite[Corollary 13]{LaVV10}, no irregular subline exists in $L$, and this contradiction implies \textit{(iv}).
\end{proof}

\begin{prop}\label{prop:struttura}
  Let $L$ be an $\Fq$-club with head $\Upsilon$.
  Let $\Theta$ be the point and $b$ be the subline as defined in Proposition \ref{prop:LPclub}.
  Then for any point $X$ in $\cF(\Upsilon)$, the intersection lines of $\cF(\Upsilon)$ with any $q$ distinct planes
  in $F_2$ containing $X$ span an $s$-dimensional subspace, where
  \begin{itemize}
    \item[(i)] $s=[\Theta:b]-1$ if $q>[\Theta:b]$;
    \item[(ii)] $s\in\{q-1,q\}$  if $q\le[\Theta:b]$.
  \end{itemize}
\end{prop}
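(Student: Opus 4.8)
The plan is to reduce Proposition~\ref{prop:struttura} to the computation of $\delta'$ in the previous theorem and, ultimately, to Theorem~\ref{zero} and Lemma~\ref{lemma:0}. Recall from the proof of Proposition~\ref{prop:maximal_subspaces} that $\tilde\cF(\Sigma)=\cS_{n-1,2,q}$ in $\PG(3n-1,q)$, that $F_2=\{p_2(V^{II})\mid V^{II}\in\tilde\cF(\Sigma)^{II}\}$, and that for a point $X\in\cF(\Upsilon)$ the planes of $F_2$ through $X$ are exactly the images $p_2(V^{II})$ with $V^{II}$ meeting the $n$-subspace $\cN:=\langle\cF(\Theta),\hat X\rangle_q$, where $\hat X$ is the unique point of $\langle\tilde\cF(b)\rangle_q\setminus\cF(\Theta)$ with $p_2(\hat X)=X$. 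By Proposition~\ref{zero-} and Theorem~\ref{zero}, $\cN\cap\cS_{n-1,2,q}$ is a normal rational curve $\cC$ whose order is $\min\{q,[\Theta:b]\}$, sitting inside a subvariety $\cS_{n-1,1,q}=\tilde\cF(b)$ of the Segre. The $q+1$ planes $V^{II}$ meeting $\cN$ are precisely the transversal planes through the $q+1$ points of $\cC$.

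Next I would set up the dictionary between the two pictures. Fix the point $X$ and choose any plane $V_0\in F_2$ through $X$; then $L=\cB(V_0)$ and, writing $\Phi:=\cC$ for the normal rational curve $\cN\cap\cS_{n-1,1,q}\subseteq\cS_{n-1,1,q}$, Lemma~\ref{lemma:0}\,(ii) says that $S(\Phi,\Xi^I)$, for any fixed maximal $(n-1)$-subspace $\Xi^I$ of the first family, is a normal rational curve of order exactly $\delta'$, where $\delta'=[\Theta:b]-1$ if $q>[\Theta:b]$ and $\delta'\in\{q-1,q\}$ otherwise — this is the same case distinction claimed in the statement for $s$. So the heart of the matter is to identify $s$ with $\delta'$: I claim that for a point $X\in\cF(\Upsilon)$ with preimage $\hat X$, the line $V\cap\cF(\Upsilon)=p_2(V^{II}\cap\langle\tilde\cF(b)\rangle_q)$, as $V$ ranges over the $q$ planes of $F_2$ through $X$ other than one distinguished one (or any $q$ of the $q+1$), is the $p_2$-image of the line $V^{II}\cap\langle\tilde\cF(b)\rangle_q$, and these lines all pass through $\hat X$ and each meets $\Phi$ in one further point. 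Their union of ``second intersection points'' with the fixed $(n-1)$-subspace $\cF(\Upsilon)$-side is exactly a copy of $S(\Phi,\Xi^I)$: more precisely the map sending the point of $\Phi$ on a transversal line to the point that transversal cuts out on $\cF(\Upsilon)$ is the projectivity $\alpha$ of Remark~\ref{proj-is-proj}, so the span of those $q$ intersection lines with $\cF(\Upsilon)$ through $X$ equals $\langle X, S(\Phi,\Xi^I)\rangle$ (suitably interpreted under $p_2$), which has dimension $s=\dim\langle S(\Phi,\Xi^I)\rangle=\delta'$ by Lemma~\ref{lemma:0}\,(ii) — here one uses that $S(\Phi,\Xi^I)$ is a normal rational curve of order $\delta'$, hence spans a $\delta'$-space, and that adding the base point $X$ does not increase the dimension since $X$ already lies on each of those lines. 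Plugging in the two cases of Lemma~\ref{lemma:0}\,(ii) gives exactly (i) and (ii).

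The last bookkeeping step is to check that the $q+1$ transversal lines (equivalently the $q+1$ planes of $F_2$ through $X$) are genuinely distinct and in bijection with the $q+1$ points of $\Phi$, and that discarding one of them corresponds to the passage from a $(t+1)$-set to a $t$-set in Lemma~\ref{lemma:0} — but since the statement asks only for the span of \emph{any} $q$ of the planes, and a normal rational curve of order $\delta'$ has the property that any $\delta'+1$ of its points are independent, it does not matter which plane is omitted once $q\ge\delta'+1$; when $\delta'=q$ all $q+1$ points are needed to reach the full $q$-space but any $q$ of them already span it since they are $\geq q$ points of a rational curve of order $q$ in a $q$-space, i.e.\ a frame-like configuration. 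I would phrase this uniformly by invoking that $\delta'+1$ points of a normal rational curve of order $\delta'$ are always independent and $\delta'+2$ points are never, so any $q\ge\delta'$ of the $q+1$ points span exactly a $\delta'$-space.

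I expect the main obstacle to be the precise identification of ``the line $V\cap\cF(\Upsilon)$'' with the $p_2$-image of a transversal line of the Segre and the verification that, as $V$ varies, the assignment $(\text{point of }\Phi)\mapsto(\text{point of }V\cap\cF(\Upsilon))$ is the projectivity $\alpha$ of Lemma~\ref{lemma:0}, i.e.\ that the relevant configuration in $\PG(3n-1,q)$ really is the $S(\Phi,\Xi^I)$ of that lemma and not something coarser; once this is in place, everything reduces to a dimension count for normal rational curves, which is routine. A secondary, purely technical point is to keep track of the excluded point $\cF(\Theta)$ under $p_2$ so that all the ``lines'' and ``planes'' involved are honest affine-type objects of the stated dimensions; this is handled exactly as in the proof of Proposition~\ref{prop:maximal_subspaces} and I would cite that proof rather than repeat it.
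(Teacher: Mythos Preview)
Your overall strategy is the paper's: lift $X$ to $\hat X$ (the paper writes $P$), form the $n$-space $\cH=\langle\cF(\Theta),\hat X\rangle_q$, identify $\Phi=\cH\cap\tilde\cF(b)$ as a normal rational curve of order $\min\{q,[\Theta:b]\}$ via Theorem~\ref{zero}, and then invoke Lemma~\ref{lemma:0}\,(ii) for $S(\Phi,\Xi^I)$. So the route is right, but the final dimension count has a genuine gap.

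The problem is your claim that $\langle X,\,p_2(S(\Phi,\Xi^I))\rangle$ has dimension $\delta'$. Your justification, ``adding the base point $X$ does not increase the dimension since $X$ already lies on each of those lines,'' is circular: of course $X$ lies in the span of the lines, but the question is precisely whether $X$ already lies in $\langle p_2(S(\Phi,\Xi^I))\rangle$, and you have not shown this for a generic choice of $\Xi^I$. Your bookkeeping paragraph then compounds the issue: when $\delta'=q$, you assert that ``any $q$ of them already span'' the $q$-space, but $q$ points of a frame in $\PG(q,q)$ span only a hyperplane, so this is false. (Also, a minor slip: the transversal lines $V_i^{II}\cap\langle\tilde\cF(b)\rangle_q$ do \emph{not} pass through $\hat X$; they lie in pairwise disjoint planes of the second family. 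It is only after applying $p_2$ that they all pass through $X$, because each contains a point of $\Phi\subset\cH$.)

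The paper closes this gap by a specific choice of $\Xi^I$ that you left unspecified. Let $V_0$ be the omitted plane, let $Q$ be the point of $\Phi$ on $V_0^{II}$, and take $\Xi^I=\cB(Q)$, the $(n-1)$-space of $\tilde\cF(b)^I$ through $Q$. Then $Q\in S(\Phi,\Xi^I)$ and, since $Q\in\cH$, one has $p_2(Q)=X$. Hence $X\in p_2(S(\Phi,\Xi^I))$ automatically, and the span of the $q$ intersection lines equals $\langle p_2(S(\Phi,\Xi^I))\rangle$, which is a $\delta'$-space on the nose. With this choice no case analysis on $q$ versus $\delta'$ is needed, and both the value in~(i) and the range in~(ii) drop out directly from Lemma~\ref{lemma:0}\,(ii).
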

\begin{proof} Let $p_2$ be the projection map as defined in the proof of Proposition \ref{prop:maximal_subspaces}, $X=p_2(P)$, and $\cH=\langle\cF(\Theta),P\rangle_q$.
  For any plane $V=p_2(V^{II})$, it holds $X\in V$ if, and only if $V^{II}\cap\cH\neq\emptyset$.
  The intersection $\cH\cap\tilde\cF(b)$ is a normal rational curve of order $\min\{q,[\Theta:b]\}$ 
  (cf.\ Theorem \ref{zero}).
  Let $V_0=p_2(V_0^{II})$ be the unique plane of $F_2$ through $X$ distinct from the $q$ planes 
  chosen in the assumptions
  (cf.\ Proposition \ref{prop:maximal_subspaces}).
  Let $Q=\tilde\cF(b)\cap V_0^{II}$; $\cB(Q)$ is an $(n-1)$-subspace of   $\tilde\cF(b)^I$.
  Such $\cB(Q)$ is mapped onto $\cB(X)=\cF(\Upsilon)$ by $p_2$.
  Assume $V_i=p_2(V_i^{II})$, $i=1,2,\ldots,q$, are the $q$ planes chosen in the assumptions.
  Any $V_i^{II}$, $i=1,2,\ldots,q$, intersects $\cH$, hence $V_i^{II}\cap\cB(Q)$
  is the intersection of $\cB(Q)$ with a transversal line of $\tilde\cF(b)$
  intersecting the normal rational curve $\cH\cap\tilde\cF(b)$.
  By Lemma \ref{lemma:0} \textit{(ii)}, the set
  \[
    S=\{V_i^{II}\cap\cB(Q)\mid i=1,2,\ldots,q\}\cup\{Q\}
  \]
  is a normal rational curve of order $s$ where $s$ takes the values as stated in $(i)$ and $(ii)$.
  Since $V_i\cap\cF(\Upsilon)$ is the line through $X$ and a point of $p_2(S)$, distinct from $X$,
  the span of the intersection lines is the same as the span of $p_2(S)$.
\end{proof}

\begin{theorem}\label{thm:num_orb}
  Let $\cI_{n,q}$ be the set of integers $h$ dividing $n$ and such that $1<h<q$.
  For any $h\in\cI_{n,q}$, let $L_h$ be the linear set obtained by projecting
  a $q$-subplane $\Sigma$ of $\PG(2,q^n)$ from a point $\Theta_h$ collinear with a
  $q$-subline $b$ in $\Sigma$ and such that $[\Theta_h:b]=h$.
  Then the set $\Lambda=\{L_h\mid h\in\cI_{n,q}\}$ contains $\Fq$-clubs in $\PG(1,q^n)$ 
  all belonging to distinct orbits under $\PGL(2,q^n)$.
\end{theorem}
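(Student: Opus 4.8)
The plan is to attach to every $\Fq$-club $L$ an integer $s(L)$ that is invariant under $\PGL(2,q^n)$ and which, on the members of $\Lambda$, recovers the defining parameter: $s(L_h)=h-1$.

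I would begin by checking that each $L_h$, $h\in\cI_{n,q}$, really is an $\Fq$-club of $\PG(1,q^n)$. Since $h\mid n$, a point $\Theta_h$ of $\langle b\rangle_{q^n}$ with $[\Theta_h:b]=h$ exists (take it in the subgeometry $\PG(1,q^h)$ of $\langle b\rangle_{q^n}$ on $b$, using a primitive element of $\F_{q^h}$ as nonhomogeneous coordinate). As $\Theta_h\in\langle b\rangle_{q^n}\setminus b$ and $\langle b\rangle_{q^n}\cap\Sigma=b$, the centre $\Theta_h$ lies off the subplane $\Sigma$; hence the projection $p_1$ is defined on all of $\Sigma$ and does not lower the rank, so $L_h=p_1(\Sigma)$ is an $\Fq$-linear set of rank $3$ in $\PG(1,q^n)$. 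A routine weight computation — the only points of $\Sigma$ collapsed by $p_1$ are the $q+1$ points of $b$, all onto the single point $\Upsilon=\langle b\rangle_{q^n}\cap\PG(1,q^n)$ — shows that $\Upsilon$ has weight exactly $2$ in $L_h$, so $L_h$ is a club with head $\Upsilon$.

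For the invariant, recall that field reduction $\cF=\cF_{2,n,q}$ is equivariant: if $g\in\PGL(2,q^n)$ satisfies $g(L_h)=L_{h'}$, then $g$ lifts to a collineation $\bar g$ of $\PG(2n-1,q)$ stabilising the Desarguesian spread $\cD$ and inducing $g$ on it, so that $\bar g\circ\cF=\cF\circ g$ on points; in particular $\bar g$ maps $\tilde\cF(L_h)$ onto $\tilde\cF(L_{h'})$ and, weight being a projective invariant, maps the head $\cF(\Upsilon_h)$ onto $\cF(\Upsilon_{h'})$. By Proposition~\ref{prop:maximal_subspaces}\,(ii)--(iv), for a club $L\not\cong\PG(1,q^2)$ the family $F_2$ is intrinsically determined by the pair $(\tilde\cF(L),\cF(\Upsilon))$, namely as the set of planes of $\tilde\cF(L)$ meeting $\cF(\Upsilon)$ in exactly a line. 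Hence $\bar g$ carries the $F_2$ of $L_h$ onto the $F_2$ of $L_{h'}$, and therefore preserves the integer
\[
  s(L)=\dim\langle\, V_1\cap\cF(\Upsilon),\ \ldots,\ V_q\cap\cF(\Upsilon)\,\rangle ,
\]
where $V_1,\ldots,V_q$ are any $q$ distinct planes of $F_2$ through a fixed point of $\cF(\Upsilon)$; that $s(L)$ does not depend on these choices is part of Proposition~\ref{prop:struttura}, provided $q>[\Theta:b]$.

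To conclude, note that for $L_h$ we have $[\Theta_h:b]=h<q$, so Proposition~\ref{prop:struttura}\,(i) gives $s(L_h)=h-1$. If $L_h$ and $L_{h'}$ were in the same $\PGL(2,q^n)$-orbit we would obtain $h-1=s(L_h)=s(L_{h'})=h'-1$, i.e.\ $h=h'$; thus distinct $h\in\cI_{n,q}$ yield clubs in pairwise distinct orbits. The point I expect to be the main obstacle is the case excluded in Proposition~\ref{prop:maximal_subspaces}\,(iv): the club $L_2$ (which lies in $\Lambda$ whenever $2\mid n$ and $q\ge 3$) is projectively equivalent to a $\PG(1,q^2)$-subgeometry, so the intrinsic description of $F_2$ is not available for it, and $L_2$ must be separated from the $L_h$ with $h\ge 3$ by a direct argument — e.g.\ using that the $\PG(1,q^2)$-subgeometries form a single $\PGL(2,q^n)$-orbit while no $L_h$ with $h\ge 3$ is a subgeometry (its field of linearity being $\Fq$), or by computing $s$ directly for a $\PG(1,q^2)$-subgeometry and seeing that it equals $1$. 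One should also keep in mind the two routine inputs the argument rests on: the existence of $\Theta_h$ (which uses $h\mid n$) and the fact that $h<q$ is exactly what places us in case (i), rather than case (ii), of Proposition~\ref{prop:struttura}.
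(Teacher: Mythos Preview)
Your proposal follows essentially the same route as the paper: define the invariant $s$ via Proposition~\ref{prop:struttura}, use Proposition~\ref{prop:maximal_subspaces}\,(iv) to make $F_2$ intrinsic, and transport everything through field reduction. The paper's write-up is terser, but the architecture is the same.

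Where your argument is genuinely weaker than the paper's is the $\PG(1,q^2)$ case. Two points:

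\emph{(1) Logical structure.} You assert that $L_2$ \emph{is} projectively equivalent to a $\PG(1,q^2)$-subgeometry and then propose to separate $L_2$ from the $L_h$ with $h\ge3$. But you have not proved that assertion, and more importantly it is not what is needed. What must be shown is the converse implication: \emph{if} some $L_h\cong\PG(1,q^2)$, \emph{then} $h=2$. With that in hand, the orbit separation is immediate: two clubs in the same orbit are either both $\not\cong\PG(1,q^2)$ (use $s$) or both $\cong\PG(1,q^2)$ (forcing $h=h'=2$). This is exactly how the paper argues.

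\emph{(2) The suggested fixes.} Your first suggestion, that ``no $L_h$ with $h\ge3$ is a subgeometry, its field of linearity being $\Fq$'', does not work as stated: a $\PG(1,q^2)$-subline \emph{is} an $\Fq$-linear set of rank $3$ (take $U=\F_{q^2}\times\Fq\subset\F_{q^n}^2$), so $\Fq$-linearity alone does not distinguish them. Your second suggestion, to compute $s$ directly for a $\PG(1,q^2)$-subgeometry, is the right idea and is what the paper does---but it has to be phrased carefully, since for such a club $F_2$ is \emph{not} intrinsic. The paper's argument is: assume $L_h\cong\PG(1,q^2)$; then $\tilde\cF(L_h)$ carries two canonical partitions $\cP_1$ (into $(n-1)$-subspaces) and $\cP_2$ (into $3$-subspaces), and every line of $\tilde\cF(L_h)$ lies in a member of $\cP_1\cup\cP_2$ (cf.\ \cite[Lemma~11]{LaVV10}); this forces each plane of the (presentation-dependent) family $F_2$ to sit inside a $3$-space of $\cP_2$, so all planes of $F_2$ through a point $X\in\cF(\Upsilon)$ meet $\cF(\Upsilon)$ in the \emph{same} line, giving $s=1$ and hence $h=2$ by Proposition~\ref{prop:struttura}. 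Filling in this step would complete your proof.
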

\begin{proof}
  If $n$ is odd, then no club is isomorphic to $\PG(1,q^2)$.
  So, by Proposition \ref{prop:maximal_subspaces} \textit{(iv)},
  the families $F_1$ and $F_2$ are uniquely determined.
  The thesis is a consequence of Proposition \ref{prop:struttura}, taking into account that if
  $L$ and $L'$ are projectively equivalent, then $\tilde\cF(L)$ and $\tilde\cF(L')$ are projectively equivalent in
  $\PG(2n-1,q)$.
  
  In order to deal with the case $n$ even, it is enough to show that in $\Lambda$ at most one club
  is isomorphic to $\PG(1,q^2)$.
  So assume $L_h\cong\PG(1,q^2)$.
  Then $\tilde\cF(L_h)$ has a partition $\cP_1$ in $(n-1)$-subspaces, and a partition $\cP_2$ in 3-subspaces.
  From \cite[Lemma 11]{LaVV10} it can be deduced that any line contained in $\tilde\cF(L_h)$
  is contained in an element of $\cP_1$ or $\cP_2$.
  The intersections of a subspace $U$ of a family $\cP_i$ with the elements of the other family form a line
  spread of $U$.
  Hence all planes in $F_2$ are contained in 3-subspaces of $\cP_2$, and all planes of
  $F_2$ through a point $X$ in $\cF(\Upsilon)$ meet $\cF(\Upsilon)$ in the same line.
  By Proposition \ref{prop:struttura} this implies $h=2$.  
\end{proof}

\textbf{Acknowledgement.}
The authors thank Hans Havlicek for his helpful remarks in the preparation of this paper.

\appendix
\section{Appendix: On a result in \cite{Burau}}

In \cite[p.175]{Burau} the following result \textit{(Korollar)} is stated for $F=\mathbb C$.

\begin{cor}\label{cor:1}
  Let $F$ be an algebraically closed field.  
  If an $s$-subspace $S_s$ of $\PG(2s-1,F)$ meets all $S^I\in\segre{s-1}1F^I$ only in points, 
  then such points span $S_s$.
\end{cor}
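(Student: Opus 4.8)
The plan is to reduce the statement to a fact about $s\times(s-1)$ matrix pencils and then establish that fact; the only serious work is the pencil fact, which is where algebraic closure enters.

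\emph{Translation.} Fix the standard model $\PG(2s-1,F)=\mathbb{P}(F^2\otimes_F F^s)$, in which $\segre{s-1}1F$ is the variety of rank-one tensors and $S^I_{[u]}=\mathbb{P}(\{u\otimes z:z\in F^s\})$ for $[u]\in\mathbb{P}(F^2)$. Write $S_s=\mathbb{P}(W)$ with $\dim W=s+1$ and pick $s-1$ bilinear forms $b_i(u,z)=u^TB_iz$, $B_i$ a $2\times s$ matrix, cutting out $W$. Since $\dim W+s=2s+1>2s$, the subspace $S_s$ meets every $S^I_{[u]}$, so the hypothesis says exactly that each intersection $S_s\cap S^I_{[u]}$ is a single point; this is equivalent to the linear forms $z\mapsto b_i(u,z)$ being independent for every $u\ne 0$, i.e.\ to the $s\times(s-1)$ pencil $M(u)=[\,B_1^Tu\mid\cdots\mid B_{s-1}^Tu\,]=u_0M_0+u_1M_1$ having full column rank at every $[u]\in\mathbb{P}^1(F)$. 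The unique point of $S_s\cap S^I_{[u]}$ is $\langle u\otimes\phi(u)\rangle$, where $\phi(u)$ spans $\ker M(u)^T$; by Cramer's rule one may take the coordinates of $\phi(u)$ to be the signed maximal minors $\mu_1(u),\dots,\mu_s(u)$ of $M(u)$, which are binary forms of degree $s-1$. So $\Phi:=S_s\cap\segre{s-1}1F=\{\langle u\otimes\phi(u)\rangle:[u]\in\mathbb{P}^1\}$, and the assertion $\langle\Phi\rangle=S_s$ becomes: the vectors $u\otimes\phi(u)$ span all of $W$.

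\emph{The pencil fact.} I claim that an $s\times(s-1)$ pencil over an algebraically closed field with full column rank at every point of $\mathbb{P}^1$ has linearly independent maximal minors, hence they span the whole $s$-dimensional space of binary forms of degree $s-1$. I would prove this by reading $M$ as a morphism of bundles $\mathcal{O}_{\mathbb{P}^1}(-1)^{\oplus(s-1)}\xrightarrow{M}\mathcal{O}_{\mathbb{P}^1}^{\oplus s}$: full rank at every geometric point makes $M$ a subbundle inclusion, so $\mathrm{coker}\,M$ is a line bundle, necessarily $\mathcal{O}_{\mathbb{P}^1}(s-1)$ by a degree count; the cokernel map $\mathcal{O}^{\oplus s}\twoheadrightarrow\mathcal{O}(s-1)$ is given precisely by the $s$ maximal minors, and its kernel is $\mathrm{im}\,M\cong\mathcal{O}(-1)^{\oplus(s-1)}$, which has $H^1=0$; so on global sections we obtain a surjection $F^s\twoheadrightarrow H^0(\mathcal{O}(s-1))\cong F^s$, which is an isomorphism. (Equivalently, Kronecker's classification of pencils forces such an $M$ to be equivalent, under constant invertible row and column operations — which alter the minor vector only by an invertible constant matrix — to the single block $L_{s-1}^T$, whose maximal minors are the monomials $u_0^{s-1},\dots,u_1^{s-1}$.) This is the only place algebraic closure is used, and it explains the finite-field counterexamples: over a non-closed field a full-rank pencil may carry blocks attached to irreducible polynomials.

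\emph{Conclusion.} The $2s$ entries of $u\otimes\phi(u)$ are the binary forms $u_a\mu_j(u)$, $a\in\{0,1\}$, which span the full $(s+1)$-dimensional space of binary forms of degree $s$ because the $\mu_j$ span all forms of degree $s-1$. Writing $u\mapsto u\otimes\phi(u)$ as $L\circ\nu_s$, where $\nu_s:F^2\to F^{s+1}$ is the degree-$s$ Veronese map and $L:F^{s+1}\to W$ is linear with those $2s$ forms as coordinate functionals, we get $\mathrm{rank}(L)=s+1$; since $\nu_s(F^2)$ spans $F^{s+1}$, the linear span of the vectors $u\otimes\phi(u)$ equals $\mathrm{im}(L)$, of dimension $s+1=\dim W$, hence equals $W$, and therefore $\langle\Phi\rangle=\mathbb{P}(W)=S_s$. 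The main obstacle is the pencil fact of the previous step; the translation and this final count are routine, but one should verify that ``meets every $S^I$ only in points'' forces exactly one point on each $S^I$ (the inequality $2s+1>2s$), and that replacing $S_s$ by $\langle\Phi\rangle$ does not change the intersection with the Segre variety.
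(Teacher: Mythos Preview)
Your proof is correct, and it takes a genuinely different route from the paper's.

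The paper argues synthetically: assuming $S_t:=\langle S_s\cap\cS_{s-1,1,F}\rangle$ has $t<s$, it invokes Burau's containment $S_t\subset\langle\cS_{t-1,1,F}\rangle$, chooses a complementary $\cS_{s-t-1,1,F}$, and projects a complementary $S_{s-t-1}\subset S_s$ into $\langle\cS_{s-t-1,1,F}\rangle$. If the projected subspace missed the smaller Segre variety, two reguli through a common pair $A^I,B^I$ would give projectivities $\kappa,\kappa':A^I\to B^I$; algebraic closure enters as the existence of a fixed point of $\kappa'^{-1}\kappa$, forcing a common transversal and a contradiction. A further incidence chase then contradicts the definition of $S_t$.

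Your approach translates the hypothesis into the statement that an $s\times(s-1)$ pencil $M(u)=u_0M_0+u_1M_1$ has full column rank on all of $\mathbb{P}^1(F)$, and then settles the question with a single structural fact: either the short exact sequence $0\to\mathcal{O}(-1)^{s-1}\to\mathcal{O}^s\to\mathcal{O}(s-1)\to 0$ on $\mathbb{P}^1$ and vanishing of $H^1(\mathcal{O}(-1))$, or Kronecker's normal form, forces the maximal minors to be a basis of binary $(s-1)$-forms. Algebraic closure enters at exactly the same conceptual point---preventing rank drops at non-rational closed points / forbidding Jordan blocks attached to irreducible polynomials---and you correctly flag this as the source of the finite-field counterexamples the paper discusses.

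What each approach buys: the paper's argument is elementary and entirely within classical projective geometry, in keeping with the Burau reference it is meant to repair; it needs no sheaves and no classification theorem. Your argument imports more machinery but is shorter, more structural, and yields extra information for free: the factorisation through the Veronese $\nu_s$ shows directly that $\Phi$ is a normal rational curve of order $s$, recovering Lemma~\ref{lemma:0}(i) in this case, and the Kronecker viewpoint makes transparent why the statement fails over non-closed fields. One small presentational point: you might state explicitly that over an algebraically closed field the $F$-rational points exhaust the closed points of $\mathbb{P}^1$, so ``full column rank at every $[u]\in\mathbb{P}^1(F)$'' really does give a subbundle inclusion---this is the hinge on which the bundle argument turns.
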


In \cite{Burau} the previous result is seemingly proved using methods valid
in any field with enough elements. However such a generalisation would contradict Theorem 3.3.
In the opinion of the authors the proof in \cite{Burau} is obtained using an erroneous argument. As a matter of fact, it is claimed in the proof at page 174 that 
the assumption $\langle\Phi\rangle=S_s$
  is not used.
  However the contradiction $S_s\subset\langle\cS_{s-2,1,\mathbb C}\rangle$ is inferred from 
  $\Phi\subset\cS_{s-2,1,\mathbb C}$.

A further counterexample, which exists whenever a hyperbolic quadric $Q^+(3,F)$ in a three-dimensional projective space admits an external line (a condition which is not met when the field $F$ is algebraically closed) is the following. If $\ell$ is the line 
corresponding to the two-dimensional vector space
  $\langle e_1\rangle \otimes \langle e'_1,e'_2\rangle$ and $m$ is a line
  external to the hyperbolic quadric obtained by the intersection of the Segre variety $\cS_{2,1,F}$ with the 3-space corresponding to the vector space $\langle e_2,e_3\rangle \otimes \langle e'_1,e'_2\rangle$, then the 3-dimensional subspace $\langle \ell,m\rangle$
  intersects $\cS_{2,1,F}$ in the line $\ell$ belonging to $\segre21F^{II}$.

For the sake of completeness, a proof for corollary \ref{cor:1} is given.
\begin{proof}[Proof of corollary \ref{cor:1}]
Define
\begin{equation}\label{eq}
  S_t=\langle S_s\cap \segre{s-1}1F\rangle, ~t=\mbox{dim}S_t
\end{equation}
and suppose $t<s$.
It is proved in \cite[p.173 (6)]{Burau} that $S_t\subset\langle \segre{t-1}1F\rangle$ for some
$\segre{t-1}1F\subset\segre{s-1}1F$.

Note that $S_s\cap\langle \segre{t-1}1F\rangle=S_t$; otherwise, comparing dimensions, $S_s$ would intersect each
$S^I\in\segre{t-1}1F$  in more than one point. 
Now choose
\begin{itemize}
  \item a subspace $S_{s-t-1}\subset S_s$ such that $S_{s-t-1}\cap\langle\segre{t-1}1F\rangle=\emptyset$;
  \item a Segre variety $\segre{s-t-1}1F\subset\segre{s-1}1F$, such that 
  $\langle \segre{s-t-1}1F\rangle\cap\langle\segre{t-1}1F\rangle=\emptyset$;
  \item two distinct $A^I,B^I\in\segre{s-t-1}1F^I$.
\end{itemize}
Since $\langle\segre{s-t-1}1F\rangle$ and $\langle\segre{t-1}1F\rangle$ are complementary subspaces of
$\langle \segre{s-1}1F\rangle$, a projection map
\[
  \pi:\,\langle \segre{s-1}1F\rangle\setminus\langle\segre{t-1}1F\rangle\rightarrow\langle\segre{s-t-1}1F\rangle
\]
is defined by $\pi(P)=\langle P\cup \segre{t-1}1F\rangle\cap\langle\segre{s-t-1}1F\rangle$.

Now suppose $\pi(S_{s-t-1})\cap\segre{s-t-1}1F=\emptyset$.
In $\langle \segre{s-t-1}1F\rangle$ consider
\begin{itemize}
\item the regulus $\cR$ corresponding to $\segre{s-t-1}1F^I$, and the projectivity
$\kappa:\,A^I\rightarrow B^I$ such that, for any $P\in A^I$,
the line $\langle P,\kappa(P)\rangle$ belongs to $\segre{s-t-1}1F^{II}$;
\item the regulus $\cR'$ containing $A^I$, $B^I$ and $\pi(S_{s-t-1})$, and the 
projectivity
$\kappa':A^I\rightarrow B^I$ such that, for any $P\in A^I$,
the line $\langle P,\kappa'(P)\rangle$ is a transversal line of $\cR'$.
\end{itemize}
Since $F$ is an algebraically closed field, $\kappa'^{-1}\circ\kappa$ has a fixed point $P$.
Therefore $\kappa(P)=\kappa'(P)$, so $\cR$ and $\cR'$ have a common transversal.
This contradicts $\pi(S_{s-t-1})\cap \segre{s-t-1}1F=\emptyset$.
So, a point $P\in S_{s-t-1}$ exists such that
$\pi(P)\in\segre{s-t-1}1F$.

Next, let $C^I\in\segre{s-1}1F^I$ be such that $\pi(P)\in C^I$, and $Q$ the point in 
$\langle \segre{t-1}1F\rangle$ such that $Q$, $P$, and $\pi(P)$ are collinear.
If $Q\in S_t$, then $\pi(P)\in S_s$, a contradiction; also $Q\in C^I$ leads to a contradiction 
(since it implies $P\in C^I$).
So $Q\not\in S_t\cup C^I$ and by a dimension argument two points $Q_1\in C^I\setminus S_t$
and $Q_2\in S_t\setminus C^I$ exist such that $Q$, $Q_1$ and $Q_2$ are collinear:
they are on the unique line through $Q$ meeting both $C^I\cap\langle \segre{t-1}1F\rangle$
and a $(t-1)$subspace of $S_t$ disjoint from $C^I$.

The plane $\langle P,Q_1,Q_2\rangle$ contains the lines $PQ_2\subset S_s$ and
$\pi(P)Q_1\subset \segre{s-1}1F$ which meet outside $\langle \segre{t-1}1F\rangle$.
This is again a contradiction.
\end{proof}

\end{document}